\newtheorem{theorem}{Theorem}[section]
\newtheorem{lemma}[theorem]{Lemma}
\newtheorem{proposition}[theorem]{Proposition}
\newtheorem{corollary}[theorem]{Corollary}
\theoremstyle{definition}
\newtheorem{definition}[theorem]{Definition}
\newtheorem{example}[theorem]{Example}
\newtheorem{remark}[theorem]{Remark}
\newcommand{\zco}{\mathfrak{C}\mspace{1mu}}
\newcommand{\zdu}{\mathfrak{D}\mspace{1mu}}
\newcommand{\CZ}{\mathbb{C}\mathfrak{Z}}
\newcommand{\Z}{\mathfrak{Z}}
\numberwithin{equation}{section}
\title{Spectrally Simple Zeros of Zeon Polynomials}
\author{G.~Stacey Staples}
\affil{Department of Mathematics \& Statistics\\
 Southern Illinois University Edwardsville\\
 Edwardsville, IL 62026-1653, USA\\
Email: sstaple@siue.edu}
\date{}
\begin{document}

\maketitle

\begin{abstract}
Combinatorial properties of zeons have been applied to graph enumeration problems, graph colorings, routing problems in communication networks, partition-dependent stochastic integrals, and Boolean satisfiability.   Power series of elementary zeon functions are naturally reduced to finite sums by virtue of the nilpotent properties of zeons.  Further, the zeon extension of any analytic complex function has zeon polynomial representations on associated equivalence classes of zeons.  

In this paper, zeros of polynomials over complex zeons are considered.  Existing results for real zeon polynomials are extended to the complex case and new results are established.  In particular, a fundamental theorem of zeon algebra is established for spectrally simple zeros of complex zeon polynomials, and an algorithm is presented that allows one to find spectrally simple zeros when they exist.  As an application, inverses of zeon extensions of analytic functions are computed using polynomial methods. \\
Keywords: zeons; polynomials; fundamental theorem of algebra; spectral equivalence classes; analytic functions\\
MSC2020: 13B25, 05E40, 15A66, 81R05
\end{abstract}

\section{Introduction}

Letting $Z_n$ denote the multiplicative semigroup generated by a collection of commuting null-square variables $\{\zeta_{\{i\}}:1\le i\le n\}$ and identity $1=\zeta_\varnothing$, the resulting $\mathbb{R}$-algebra,  denoted here by $\Z_n$ has appeared in multiple guises over many years.  In recent years, $\Z_n$ has come to be known as the {\em $n$-particle zeon\footnote{The name ``zeon algebra'' was coined by Feinsilver~\cite{Feinsilverzeons}, stressing their relationship to both bosons (commuting generators) and fermions (null-square generators).} algebra}.   They naturally arise as commutative subalgebras of fermions.

Combinatorial properties of zeons have been applied to graph enumeration problems, partition-dependent stochastic integrals, and routing problems in communication networks, as summarized in \cite{OCGraphs}.   More recent combinatorial applications include graph colorings~\cite{StaplesStellhorn2017} and Boolean satisfiability~\cite{DavisStaples2019}.   Combinatorial identities involving zeons have also been developed in papers by Neto~\cite{NetoJIS2014,NetoJIS2015,NetoJIS2016,NetoSIAM}.   
   
A permanent trace formula analogous to MacMahon's Master Theorem was presented and applied by Feinsilver and McSorley in \cite{FeinsilverMcSorleyIJC}, where the connections of zeons with permutation groups acting on sets and the Johnson association scheme were illustrated.  

Power series of elementary zeon functions are reduced to zeon polynomials by virtue of the nilpotent properties of zeons~\cite{staplesweygandt}.  The zeon extension of any analytic function is reduced to a polynomial of degree not exceeding the number of generators in the algebra.  Finding zeros of zeon polynomials is of immediate interest for zeon differential calculus based on the power series approach~\cite{zeoncalculus}.   

From a different perspective, differential equations of first and second order in zeon algebras have recently been studied by Mansour and Schork~\cite{Mansour}.  Combinatorial properties of zeons are prominent throughout their work.

In the current paper, zeros of polynomials over complex zeons are considered.  Existing results for real zeon polynomials are extended to the complex case and new results are established.  In particular, a fundamental theorem of algebra is established for spectrally simple zeros of complex zeon polynomials.  An algorithm is presented that allows one to find spectrally simple zeros when they exist.

The rest of the paper is laid out as follows.  Terminology and notational conventions are established in subsection \ref{preliminaries}.  Useful machinery for polynomial division is established in Section \ref{poly div}, where a zeon remainder theorem is developed.  Zeon polyomials with complex scalar coefficients are considered in Section \ref{complex coefficients}, where basic results on zeros and their cardinalities are established.  Essential results on $k$th roots of complex zeons are established in Section \ref{complex zeon roots}.

The main results of the paper appear in Section \ref{zeon FTA}, where a Fundamental Theorem of Zeon Algebra is presented alongside an algorithm for computing spectrally simple zeros of complex zeon polynomials.  The paper closes with concluding remarks in Section \ref{conclusion}.

Examples appearing throughout the paper were computed using {\em Mathematica} with the {\bf CliffMath}~\cite{CliffMath} package, which is freely available through the {\em Research} link on the author's web page, \url{http://www.siue.edu/~sstaple}.  For a broader perspective of combinatorial and algebraic properties and applications of zeons, the interested reader is further directed to the books \cite{OCGraphs} and \cite{CAZ}

\subsection{Preliminaries}\label{preliminaries}

For $n\in\mathbb{N}$,  let $\CZ_n$ denote the complex abelian algebra generated by the collection $\{\zeta_{\{i\}}: 1\le i\le n\}$ along with the scalar $1=\zeta_\varnothing$ subject to the following multiplication rules:
\begin{gather*}
\zeta_{\{i\}}\,\zeta_{\{j\}}=\zeta_{\{i,j\}}=\zeta_{\{j\}}\,\zeta_{\{i\}}\,\,\text{ \rm
for }i\ne j,\text{ \rm and}\\
{\zeta_{\{i\}}}^2=0\,\,\text{ \rm for }1\le i\le n.
\end{gather*}
It is evident that a general element $u\in\CZ_n$ can be expanded as $u=\displaystyle\sum_{I\in2^{[n]}}u_{I}\,\zeta_{I}$, or more simply as $\sum_I u_I\zeta_I$,  where $I\in2^{[n]}$ is a subset of the {\em $n$-set}, $[n]:=\{1,2,\ldots,n\}$, used as a multi-index, $u_{I}\in\mathbb{C}$, and $\displaystyle\zeta_{I}=\prod_{\iota\in I}\zeta_\iota$.  The algebra $\CZ_n$ is called the ($n$-particle) complex {\em zeon algebra}\footnote{The $n$-particle (real) zeon algebra has also been denoted by ${\mathcal{C}\ell_n}^{\rm nil}$ in a number of papers because it can be constructed as a subalgebra of the Clifford algebra $\mathcal{C}\ell_{n,n}$.}.  

As a vector space, this $2^n$-dimensional algebra has a canonical basis of  {\em basis blades} of the form $\{\zeta_I: I\subseteq [n]\}$.  The null-square property of the generators $\{\zeta_i:1\le i\le n\}$ guarantees that the product of two basis blades satisfies the following:\begin{equation}\label{blade product}
\zeta_I\zeta_J=\begin{cases}
\zeta_{I\cup J}& I\cap J=\varnothing,\\
0&\text{\rm otherwise.}
\end{cases}
\end{equation}
It should be clear that $\CZ_n$ is graded.  For non-negative integer $k$, the {\em grade-$k$ part} of element $u=\sum_I u_I\zeta_I$ is defined as \begin{equation*}
\langle u\rangle_k=\sum_{\{I:|I|=k\}}u_I\zeta_I.
\end{equation*}

The maximal ideal consisting of nilpotent zeon elements will be denoted by \begin{equation*}
{\CZ_n}^\circ=\{u\in \CZ_n: \zco u=0\}.
\end{equation*}   
The multiplicative abelian group of invertible zeon elements is denoted by \begin{eqnarray*}
{\CZ_n}^\times&=&\CZ_n\setminus {\CZ_n}^\circ\\
&=&\{u\in \CZ_n: \zco u\ne 0\}.
\end{eqnarray*}

For convenience, arbitrary elements of $\CZ_n$ will be referred to simply as ``zeons.''  In what follows, it will be convenient to separate the scalar part of a zeon from the rest of it. To this end, for $z\in\CZ_n$ we write $\zco z=\langle z\rangle_0$, the {\em complex (scalar) part} of $z$, and $\mathfrak{D}z=z-\zco z$, the {\em dual part}~\footnote{The term ``dual'' here is motivated by regarding zeons as higher-dimensional dual numbers.} of $z$.

\subsection{Multiplicative Properties of Zeons}\label{group properties}

Since $\CZ_n$ is an algebra, its elements form a multiplicative semigroup.  It is not difficult to establish convenient formulas for expanding products of zeons.   As shown in \cite{DollarStaples}, $u\in\CZ_n$ is invertible if and only if $\zco u\ne 0$.  Moreover, the multiplicative inverse of $u$ is unique.  The result is paraphrased here for review.  

\begin{proposition}\label{zeon inverses}
Let $u\in\CZ_n$, and let $\kappa$ denote the index of nilpotency~\footnote{In particular, $\kappa$ is the least positive integer such that $(\mathfrak{D}u)^\kappa=0$.} of $\mathfrak{D}u$.    It follows that $u$ is uniquely invertible if and only if $\zco u\ne0$, and the inverse  is given by \begin{equation*}
u^{-1}=\frac{1}{\zco u}\sum_{j=0}^{\kappa-1}(-1)^{j}(\zco u)^{-j}(\mathfrak{D}u)^{j}.
\end{equation*}
\end{proposition}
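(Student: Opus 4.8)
The plan is to prove both the characterization of invertibility and the explicit inverse formula. The key structural fact I would exploit is the decomposition $u = \zco u + \mathfrak{D}u$, where $\zco u \in \mathbb{C}$ is a scalar and $\mathfrak{D}u$ lies in the maximal ideal ${\CZ_n}^\circ$ of nilpotents. The crucial observation is that $\mathfrak{D}u$ is nilpotent: since every generator satisfies $\zeta_i^2 = 0$ and any product of basis blades with overlapping index sets vanishes by \eqref{blade product}, any element of ${\CZ_n}^\circ$ raised to a high enough power must vanish. In fact, because a product of $n+1$ generators must repeat an index, one has $(\mathfrak{D}u)^{n+1} = 0$ for every $u$, so the index of nilpotency $\kappa$ is well defined and satisfies $\kappa \le n+1$.

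For the necessity of the condition, I would argue the contrapositive: if $\zco u = 0$, then $u = \mathfrak{D}u$ is itself nilpotent, say $u^\kappa = 0$. A nilpotent element cannot be invertible, since $uv = 1$ would force $0 = u^\kappa v^\kappa = 1^\kappa = 1$ (using commutativity), a contradiction. For sufficiency together with the explicit formula, I would verify directly that the proposed expression is a genuine two-sided inverse. Writing $c = \zco u$ and $d = \mathfrak{D}u$, the candidate is
\begin{equation*}
w = \frac{1}{c}\sum_{j=0}^{\kappa-1}(-1)^j c^{-j} d^j,
\end{equation*}
and I would compute $u\,w = (c+d)\,w$ by distributing and recognizing a telescoping geometric sum. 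The point is that multiplying the finite sum $\sum_{j=0}^{\kappa-1}(-1)^j c^{-j} d^j$ by $(c+d)$ produces $c$ plus a telescoping collection of $d^j$ terms, where the would-be leftover term of degree $\kappa$ carries the factor $d^\kappa = 0$ and hence drops out. This is exactly the familiar identity $(1+x)\sum_{j=0}^{\kappa-1}(-1)^j x^j = 1 - (-x)^\kappa$ applied with $x = d/c$, and here $(-x)^\kappa = 0$.

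For uniqueness, I would note that in any ring an element possessing a two-sided inverse has that inverse uniquely determined: if $w$ and $w'$ both invert $u$, then $w = w(uw') = (wu)w' = w'$. Since $\CZ_n$ is commutative this is immediate, so once existence is established uniqueness follows for free.

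I do not expect any serious obstacle here; the argument is essentially the truncation of a geometric series made rigorous by nilpotency. The one point requiring a little care is confirming that $\kappa$ is genuinely the smallest exponent killing $d$, so that the sum has the stated length and the telescoping terminates cleanly at the $d^\kappa = 0$ step — but this is built into the definition of the index of nilpotency. If I wanted to be fully self-contained I would also remark that $d^\kappa = 0$ is what guarantees the sum is finite, which is the whole reason the inverse lives inside $\CZ_n$ rather than requiring an infinite series.
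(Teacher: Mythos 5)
Your proof is correct and complete: the pigeonhole bound $(\mathfrak{D}u)^{n+1}=0$, the contrapositive argument that a nilpotent element cannot be invertible, the telescoping verification of the geometric-series formula (valid because $d^{\kappa}=0$ kills the leftover term), and the standard ring-theoretic uniqueness argument are all sound. Note, however, that the paper itself offers no proof of this proposition --- it is explicitly paraphrased from the cited reference on zeon roots (Dollar and Staples) ``for review'' --- so there is no internal argument to compare against; your direct verification is exactly the standard truncated-geometric-series argument one would expect that reference to contain, and it stands on its own.
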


For $n\in \mathbb{N}$,  ${\CZ_n}^\times$ is defined to be the collection of invertible elements in $\CZ_n$.  That is,
\begin{equation*}
 {\CZ_n}^\times=\{u\in  \CZ_n: \zco u\ne 0\}.
 \end{equation*}
Since ${\CZ_n}^\times$ is closed under (commutative) zeon multiplication, has multiplicative identity $1$, and every element has a multiplicative inverse, the invertible zeons  form an abelian group under multiplication.  

\section{Complex Zeon Roots: Existence and Recursive Formulations}\label{complex zeon roots}

The material below is generalized from the original (real) context seen in \cite{DollarStaples}.   As will be shown, invertible zeons have roots of all orders.  A recursive algorithm establishes their existence and provides a convenient method for their computation.  

\begin{theorem}\label{zeon roots}
Let $w\in{\CZ_n}^\times$, and let $k\in\mathbb{N}$.  Then, $\exists u\in{\CZ_n}^\times$ such that $u^k=w$, provided $\zco w\ne0$.  Further, writing $w=\varphi+\zeta_{\{n\}} \psi$, where $\varphi, \psi\in\CZ_{n-1}$, $u$ is computed recursively by \begin{equation*}
u=w^{1/k}=\varphi^{1/k}+\zeta_{\{n\}} \frac{1}{k}\varphi^{-(k-1)/k}\psi.
\end{equation*} 
\end{theorem}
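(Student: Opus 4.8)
The plan is to argue by induction on the number of generators $n$, exploiting the canonical splitting of $\CZ_n$ induced by the top generator $\zeta_{\{n\}}$. Every $w\in\CZ_n$ decomposes uniquely as $w=\varphi+\zeta_{\{n\}}\psi$ with $\varphi,\psi\in\CZ_{n-1}$, since each basis blade either omits the index $n$ (contributing to $\varphi$) or contains it (contributing to $\zeta_{\{n\}}\psi$). Because $\zeta_{\{n\}}\psi$ carries no scalar part, $\zco w=\zco\varphi$, so the hypothesis $\zco w\neq0$ is exactly the statement that $\varphi\in{\CZ_{n-1}}^\times$. For the base case $n=0$ we have $\CZ_0=\mathbb{C}$, and every nonzero complex number admits a $k$th root by the classical fundamental theorem of algebra (equivalently, via the polar form); fix one such root. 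This anchors the recursion, and it is also the only point where genuine non-uniqueness of roots enters, so the formula should be read as producing one admissible root once a scalar $k$th root has been selected.

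For the inductive step, assume the theorem for $\CZ_{n-1}$. Given $w=\varphi+\zeta_{\{n\}}\psi\in{\CZ_n}^\times$, the inductive hypothesis furnishes $\alpha:=\varphi^{1/k}\in{\CZ_{n-1}}^\times$ with $\alpha^k=\varphi$. Since $\alpha$ is invertible, Proposition~\ref{zeon inverses} guarantees that $\alpha^{-(k-1)}$ exists; this is precisely the quantity denoted $\varphi^{-(k-1)/k}$ in the statement, and I would emphasize that both fractional powers of $\varphi$ must be interpreted relative to the single chosen root $\alpha$. I then set
\[
u=\alpha+\zeta_{\{n\}}\,\frac{1}{k}\,\alpha^{-(k-1)}\psi,
\]
and record that $\zco u=\zco\alpha\neq0$, so that $u\in{\CZ_n}^\times$ as required.

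It remains to verify $u^k=w$, which is the computational heart of the argument but is made painless by the algebra's structure. Writing $b=\tfrac1k\alpha^{-(k-1)}\psi$, so that $u=\alpha+\zeta_{\{n\}}b$, I would invoke the ordinary binomial theorem, legitimate because $\CZ_n$ is commutative, to expand $u^k=\sum_{j=0}^{k}\binom{k}{j}\alpha^{k-j}(\zeta_{\{n\}}b)^j$. The null-square property forces $(\zeta_{\{n\}}b)^j=\zeta_{\{n\}}^{\,j}b^j=0$ for every $j\geq2$, collapsing the sum to $u^k=\alpha^k+k\,\alpha^{k-1}\zeta_{\{n\}}b$. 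Substituting $\alpha^k=\varphi$ and $k\alpha^{k-1}b=\alpha^{k-1}\alpha^{-(k-1)}\psi=\psi$ yields $u^k=\varphi+\zeta_{\{n\}}\psi=w$, completing the induction.

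I expect the only real obstacle to be bookkeeping rather than ideas: one must ensure invertibility is propagated down the recursion so that the negative power $\alpha^{-(k-1)}$ is always defined, and that the two fractional exponents appearing in the formula are consistently derived from the same chosen root of $\varphi$. Once these conventions are fixed, the nilpotence of $\zeta_{\{n\}}$ does all the heavy lifting by truncating the binomial expansion after the linear term.
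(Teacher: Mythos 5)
Your proposal is correct and follows essentially the same route as the paper: induction on $n$ using the splitting $w=\varphi+\zeta_{\{n\}}\psi$, with the binomial expansion truncated by $\zeta_{\{n\}}^2=0$ to verify $u^k=w$. The only cosmetic difference is that you anchor the induction at $n=0$ (i.e., $\CZ_0=\mathbb{C}$) rather than at $n=1$ as the paper does, and your explicit attention to propagating invertibility and to fixing a consistent scalar root is a welcome tightening of the same argument.
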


\begin{proof}
Fix $k\in\mathbb{N}$.  Assuming $w\in {\CZ_n}^\times$ ensures that  $w_\emptyset\ne 0$, so the scalar part of $w$ has $k$ distinct complex $k$th roots.  

Proof is by induction on $n$.  When $n=1$, let $w=a+b\zeta_{\{1\}}$, where $w_\emptyset=a$.  Let $a^{1/k}\in\mathbb{C}$ denote a fixed $k$th complex root of $a$.  Applying the binomial theorem and null-square properties of zeon generators, one finds 
\begin{equation*}
\left(a^{1/k}+\frac{b}{ka^{(k-1)/k}}\zeta_{\{1\}}\right)^k=a+ka^{(k-1)/k}\frac{b}{ka^{(k-1)/k}}\zeta_{\{1\}}=a+b\zeta_{\{1\}}.
\end{equation*}

Next, suppose the result holds for some $n-1\ge 1$ and let $w\in\CZ_n$ be written $w=\varphi+\zeta_{\{n\}} \psi$, where $\varphi, \psi\in\CZ_n$.  In particular,  this implies $\varphi\in{\CZ_n}^\times$. Let $\alpha=\varphi^{1/k}$, and let $u=\alpha+ \displaystyle\frac{1}{k}\zeta_{\{n\}}\alpha^{-(k-1)}\psi$.
Then \begin{eqnarray*}
u^k=\left(\alpha+\zeta_{\{n\}} \frac{1}{k}\alpha^{-(k-1)}\psi\right)^k
&=& \varphi+k\alpha^{(k-1)}\frac{1}{k}\zeta_{\{n\}}\alpha^{-(k-1)}\psi\\
&=& \varphi+\zeta_{\{n\}}\psi\\
&=&w.
\end{eqnarray*}
\end{proof}

As expected, any invertible complex zeon has $k$ distinct complex $k$th roots.

\begin{corollary}
Let $\alpha\in{\CZ_n}^\times$, and let $k\in \mathbb{N}$.  Then, $\alpha$ has exactly $k$ distinct $k$th roots; i.e., $\sharp\{u: u^k=\alpha\}=k$.
\end{corollary}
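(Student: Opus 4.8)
The plan is to combine the existence statement from Theorem~\ref{zeon roots} with two bookkeeping facts about the scalar-part map, using the latter both to bound the number of roots from above and to force uniqueness once the scalar part is pinned down.

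First I would record that the complex-part map $\zco\colon\CZ_n\to\mathbb{C}$ is a ring homomorphism. This follows directly from the blade product rule~\eqref{blade product}: the only way a product $\zeta_I\zeta_J$ can contribute to grade $0$ is $I=J=\varnothing$, so $\zco(uv)=\zco u\,\zco v$. Consequently, if $u^k=\alpha$ then $(\zco u)^k=\zco\alpha$. Since $\alpha\in{\CZ_n}^\times$ we have $\zco\alpha\neq0$, so $\zco u$ must be one of the $k$ distinct complex $k$th roots of $\zco\alpha$. This already shows that every $k$th root of $\alpha$ is invertible and that the set $\{u:u^k=\alpha\}$ is partitioned by the value of $\zco u$ into at most $k$ nonempty classes; equivalently, there are at most $k$ possible scalar parts.

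Next I would establish existence: for each of the $k$ complex numbers $c$ with $c^k=\zco\alpha$, Theorem~\ref{zeon roots} (taking $c$ as the chosen scalar $k$th root that seeds the recursion) produces a root $u\in{\CZ_n}^\times$ with $\zco u=c$. Roots arising from distinct choices of $c$ have distinct scalar parts and are therefore distinct, so $\alpha$ has at least $k$ distinct $k$th roots.

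The crux is uniqueness within a fixed scalar class, and this is the step I expect to carry the weight. Suppose $u_1^k=u_2^k=\alpha$ with $\zco u_1=\zco u_2=c$. Because $\CZ_n$ is commutative, we may factor
\begin{equation*}
\zm=u_1^k-u_2^k=(u_1-u_2)\sum_{j=0}^{k-1}u_1^{\,j}u_2^{\,k-1-j}.
\end{equation*}
Applying $\zco$ to the cofactor gives $\sum_{j=0}^{k-1}c^{\,j}c^{\,k-1-j}=k\,c^{k-1}$, which is nonzero because $c\neq0$ and the ground field $\mathbb{C}$ has characteristic zero. Hence the cofactor lies in ${\CZ_n}^\times$, and multiplying through by its inverse yields $u_1=u_2$. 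Combining this with the existence count gives exactly $k$ distinct roots, one for each complex $k$th root of $\zco\alpha$. The only place where anything can go wrong is the invertibility of the cofactor, which is precisely where the characteristic-zero hypothesis (ensuring $k\,c^{k-1}\neq0$) is essential.
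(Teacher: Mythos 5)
Your proposal is correct and follows essentially the same route as the paper's proof: existence of one root for each of the $k$ complex $k$th roots of $\zco\alpha$ via the recursion of Theorem~\ref{zeon roots}, then uniqueness within a fixed scalar class by factoring $u_1^k-u_2^k=(u_1-u_2)\sum_{j=0}^{k-1}u_1^{\,j}u_2^{\,k-1-j}$ and observing that the cofactor has scalar part $k c^{k-1}\ne 0$, hence is invertible. The only cosmetic difference is that you make the ``at most $k$ scalar classes'' step explicit through the ring-homomorphism property of $\zco$, which the paper uses implicitly.
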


\begin{proof}
From the recursive formulation of Theorem \ref{zeon roots}, we see that a complex zeon $k$th root of $\alpha$ is obtained for each of the distinct $k$th roots of $\zco\alpha$.  It suffices to show that two $k$th roots of $\alpha$ are equal whenever their scalar parts are equal.  To that end, suppose $u^k=\alpha=v^k$, and observe that $u-v$ is nilpotent if and only if $\zco u = \zco v$.  Assuming $\zco u =\zco v$, $u$ and $v$ can be written in the form $u=a+\beta$ and $v=a+\gamma$ for some nilpotent $\beta$ and $\gamma$.  Observe that the product $\alpha\delta$ of an invertible element $\alpha$ and a nilpotent $\delta$, is zero if and only if  $\delta=0$, since $0=\alpha^{-1}0=\delta$.  Hence, assuming $u^k=v^k$, one finds
\begin{eqnarray*}
u^k-v^k&=&(u-v)(u^{k-1}+u^{k-2}v+\cdots+v^{k-1})\\
&=&(u-v)\left[(a^{k-1}+\delta_1)+(a^{k-1}+\delta_2)+\cdots+(a^{k-1}+\delta_{k})\right]\\
&=&(u-v)\left(ka^{k-1}+\delta\right),
\end{eqnarray*}
where $\delta=\delta_1+\cdots+ \delta_{k}$ is nilpotent by the ideal property of ${\CZ_n}^\circ$.  It is clear that $ka^{k-1}+\delta$ is invertible, so $(u-v)(ka^{k-1}+\delta)=0$ implies $(u-v)=0$. 
 \end{proof}

Given an invertible zeon $u$ and positive integer $k$, it now makes sense to define the {\em principal $k$th root} of $u$ as the zeon $k$th root of $u$ whose scalar part is the principal root of $\zco u\in\mathbb{C}$.

\section{Zeon Polynomial Division}\label{poly div}

When the leading coefficient of a zeon polynomial is invertible, polynomial division is possible in the familiar sense.  This is formalized as follows.

\begin{theorem}[Zeon polyomial division algorithm]\label{zeon poly division}
Let $\varphi(u)$ and $\psi(u)$ be polynomials with zeon coefficients such that the leading coefficient of $\psi(u)$ is invertible.  Then, there exist unique zeon polynomials $q(u)$ and $r(u)$ such that $0\le \deg r(u)<\deg \psi(u)$ and 
\begin{equation}\label{zeon polydiv}
\varphi(u)=\psi(u)q(u)+r(u).
\end{equation}
\end{theorem}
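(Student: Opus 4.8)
The plan is to mimic the classical polynomial division algorithm, letting the invertibility of the leading coefficient of $\psi(u)$ play the role that division in a field plays in the usual argument. Write $\psi(u)=\sum_{i=0}^{m}b_i u^i$ with $m=\deg\psi$ and leading coefficient $b_m\in{\CZ_n}^\times$, and $\varphi(u)=\sum_{i=0}^{d}a_i u^i$ with $d=\deg\varphi$. Throughout, the crucial structural fact is the cancellation principle already used in the preceding corollary: if $b_m$ is invertible and $c\ne 0$, then $b_m c\ne 0$, since $b_m c=0$ would force $c=b_m^{-1}(b_m c)=0$.

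For existence I would induct on $d$. If $d<m$, take $q(u)=\zm$ and $r(u)=\varphi(u)$; the degree condition holds trivially. If $d\ge m$, form the reduced polynomial
\[
\varphi_1(u)=\varphi(u)-a_d\,b_m^{-1}\,u^{\,d-m}\,\psi(u).
\]
Because $\CZ_n$ is commutative and $b_m^{-1}b_m=1$, the coefficient of $u^d$ in $a_d b_m^{-1}u^{\,d-m}\psi(u)$ is exactly $a_d$, so the leading term cancels and $\deg\varphi_1<d$. By the induction hypothesis there exist $q_1(u)$ and $r(u)$ with $\deg r<m$ and $\varphi_1=\psi q_1+r$, whence
\[
\varphi(u)=\psi(u)\bigl(q_1(u)+a_d b_m^{-1}u^{\,d-m}\bigr)+r(u)
\]
is the desired decomposition.

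For uniqueness, suppose $\varphi=\psi q_1+r_1=\psi q_2+r_2$ with both remainders of degree less than $m$. Subtracting gives $\psi(u)\bigl(q_1(u)-q_2(u)\bigr)=r_2(u)-r_1(u)$, whose right-hand side has degree less than $m$. If $q_1\ne q_2$, let $c$ be the nonzero leading coefficient of $q_1-q_2$; then the top coefficient of the left-hand side is $b_m c$, which is nonzero by the cancellation principle, so the left-hand side has degree at least $m$. This contradicts the degree bound on the right-hand side, forcing $q_1=q_2$ and then $r_1=r_2$.

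The main obstacle is that $\CZ_n$ is not an integral domain: it contains nonzero nilpotents, so in general the degree of a product can drop below the sum of the degrees, and neither the reduction step nor the uniqueness argument survives for an arbitrary divisor. The hypothesis that $b_m$ is invertible is precisely what removes this difficulty — it supplies the factor $b_m^{-1}$ needed to clear the leading term in the existence step, and it guarantees $b_m c\ne 0$ for every nonzero $c$ in the uniqueness step. Once this point is isolated, the remainder of the proof is the standard induction and is routine.
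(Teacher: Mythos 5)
Your proof is correct and follows essentially the same route as the paper's: strong induction on $\deg\varphi$ with the standard leading-term reduction $\varphi - a_d b_m^{-1}u^{d-m}\psi$, and uniqueness by comparing degrees in $\psi(q_1-q_2)=r_2-r_1$ using invertibility of $b_m$. If anything, your write-up is slightly tighter than the paper's (you fold the base case into the $d<m$ branch and make explicit the cancellation fact $b_m c\ne 0$ for $c\ne 0$, which the paper only asserts implicitly), but these are presentational differences, not a different argument.
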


\begin{proof}
First, existence of the polynomials $q$ and $r$ is established.  Suppose $\varphi(u)=a_mu^m+\cdots+a_1u+a_0$ and $\psi(u)=b_ku^k+\cdots+b_1u+b_0$ where $\zco b_k\ne 0$.  When $k>m$, taking $q(u)=0$ and $r(u)=\varphi(u)$ clearly satisfies the statement of the theorem.  Proof is then by strong induction on $m$, the degree of $\varphi$.  Suppose $m=1$ and $k\le m$. Then,
\begin{equation*}
a_1u+a_0=\begin{cases}
(b_1u+b_0)(a_1{b_1}^{-1})+(a_0-b_0a_1{b_1}^{-1}) &k=1\\
b_0(a_1{b_0}^{-1}u+a_0{b_0}^{-1})&k=0.
\end{cases}
\end{equation*}
When $k=1$, taking $q(u)=a_1{b_1}^{-1}$ and $r(u)=a_0-b_0a_1{b_1}^{-1}$ satisfies \eqref{zeon polydiv}.  When $k=0$ existence is satisfied by taking $q(u)=a_1{b_0}^{-1}u+a_0{b_0}^{-1}$ and $r(u)=0$.

Assume now that for some $m\ge 2$, existence of $q$ and $r$ satisfying \eqref{zeon polydiv} has been established for all zeon polynomials of degree less than $m$.  Letting $q_1(u)=a_m{b_k}^{-1}u^{m-k}$, let $\rho(u)$ be defined by 
\begin{eqnarray*}
\rho(u)&=&\varphi(u)-\psi(u)q_1(u)\\
&=&(a_mu^m+\cdots+a_1u+a_0)-(b_ku^k+\cdots+b_1u+b_0)(a_m{b_k}^{-1}u^{m-k})\\
&=&(a_{m-1}u^{m-1}+\cdots+a_1u+a_0)-g(u)
\end{eqnarray*}
for zeon polynomial $g(u)$ satisfying $\deg g(u)<k\le m$.  Hence, $\deg \rho(u)<m$.  By the inductive hypothesis, there now exist zeon polynomials $q_2(u)$ and $r(u)$ such that $r_1(u)=\psi(u)q_2(u)+r(u)$, where $\deg r(u)<\deg \psi(u)$.  It follows that \begin{eqnarray*}
\varphi(u)&=&\psi(u)q_1(u)+r_1(u)\\
&=&\psi(u)q_1(u)+\psi(u)q_2(u)+r(u)\\
&=&\psi(u)(q_1(u)+q_2(u))+r(u).
\end{eqnarray*}
Setting $q(u)=q_1(u)+q_2(u)$ completes the existence proof.
Turning now to uniqueness of $q$ and $r$, suppose \begin{eqnarray*}
\varphi(u)&=&\psi(u)q_1(u)+r_1(u)\\
&=&\psi(u)q_2(u)+r_2(u)
\end{eqnarray*}
holds for zeon polynomials $q_i(u)$, $r_i(u)$, where $\deg r_i(u)< \deg \psi(u)$ for each  $i=1,2$.  It follows that 
\begin{equation}\label{unique q_r}
\psi(u)(q_1(u)-q_2(u))=r_2(u)-r_1(u).
\end{equation}
If $q_1(u)-q_2(u)\ne 0$, invertibility of the leading coefficient of $\psi(u)$ guarantees that the left-hand side of \eqref{unique q_r} is a zeon polynomial of degree $t\ge \deg\psi(u)$.  However, the right-hand side is a zeon polynomial of degree $s<\deg\psi(u)$, a contradiction.  Thus, $q_1(u)-q_2(u)=0=r_1(u)-r_2(u)$ and uniqueness of $q(u), r(u)$ is established.
\end{proof}

\begin{corollary}[Zeon remainder theorem]
Let $\varphi(u)$ be a polynomial with zeon coefficients and let $z\in{\CZ_n}$.  Then, $\varphi(z)$ is the remainder after dividing $\varphi(u)$ by the monic polynomial $u-z$.
\end{corollary}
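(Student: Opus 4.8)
The plan is to apply the division algorithm of Theorem \ref{zeon poly division} directly, taking the monic linear polynomial $u-z$ as the divisor, and then to evaluate the resulting identity at $u=z$.

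First I would observe that $u-z$ is monic, so its leading coefficient is $1$; since $\zco 1 = 1 \ne 0$, that coefficient is invertible and Theorem \ref{zeon poly division} applies. Hence there exist unique zeon polynomials $q(u)$ and $r(u)$ with $0 \le \deg r(u) < \deg(u-z) = 1$ satisfying $\varphi(u) = (u-z)q(u) + r(u)$. The degree constraint forces $r(u)$ to be a $u$-constant, i.e.\ a fixed element $r \in \CZ_n$, and it is precisely this $r$ that the corollary claims to identify as $\varphi(z)$.

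Next I would evaluate both sides at $u=z$. The crux is to check that the product term $(u-z)q(u)$ vanishes at $z$. Writing $q(u) = \sum_j c_j u^j$, the product expands to $\sum_j c_j u^{j+1} - \sum_j (z c_j) u^j$, and substituting $u=z$ gives $\sum_j c_j z^{j+1} - \sum_j z c_j z^j$. Because $\CZ_n$ is commutative, $z c_j = c_j z$, so the two sums cancel termwise and the product evaluates to $0$. Feeding this into the division identity yields $\varphi(z) = 0 + r = r$, which is exactly the assertion.

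The main obstacle here is modest but genuine: one must not silently assume that the substitution $u \mapsto z$ is a ring homomorphism on zeon polynomials, since that property fails in noncommutative settings. Commutativity of $\CZ_n$ is the single nontrivial ingredient beyond the already-established division theorem, and it is exactly what licenses the termwise cancellation showing $(z-z)q(z)=0$. Everything else is a direct consequence of the uniqueness and degree bounds furnished by Theorem \ref{zeon poly division}.
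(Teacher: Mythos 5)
Your proposal is correct and follows the same route as the paper: apply Theorem \ref{zeon poly division} with the monic divisor $u-z$ to get $\varphi(u)=(u-z)q(u)+r(u)$ with $r$ constant, then evaluate at $u=z$. The paper's proof is simply terser, leaving the evaluation step (and the role of commutativity in making substitution legitimate) implicit, whereas you spell it out.
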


\begin{proof}
The result follows from an application of Theorem \ref{zeon poly division}; in particular,
 \begin{equation*}
\varphi(u)=(u-z)q(u)+r(u).
\end{equation*}
\end{proof}

\section{Zeon Polynomials with Complex Coefficients}\label{complex coefficients}

Let $f(z)=a_mz^m+\cdots+a_1z+a_0$ $(a_m\ne 0)$ be a polynomial function with complex coefficients, and recall that a complex number $r$ such that $f(r)=0$ is called a {\em zero} of $f(z)$.  By the Fundamental Theorem of Algebra, $f(z)$ has exactly $m$ complex zeros.  

If $f(z)$ can be written in the form $f(z)=(z-r)^\ell g(z)$, where $m\in\mathbb{N}$ and $g(r)\ne0$, then $r$ is said to be a {\em zero of multiplicity $\ell$} of $f(z)$.  Equivalently, one sees that $r$ is a zero of multiplicity $\ell$ of $f(z)$ if and only if $f(r)=f'(r)=\cdots=f^{(\ell-1)}(r)=0$ and $f^{(\ell)}(r)\ne0$. Here $f^{(\ell)}(r)$ denotes the $\ell$th derivative of $f(z)$ evaluated at $r$.  For convenience, $\mu_f(r)$ will denote the multiplicity of $r$ as a zero of $f(z)$.

Letting $\varphi:\CZ_n\to \CZ_n$ denote the zeon extension\footnote{The zeon extension $\varphi$ is the polynomial function obtained by extending the domain of $f$ from $\mathbb{C}$ to $\CZ_n$.} of $f$, one is led to wonder about the number of zeros this polynomial may have in $\CZ_n$.

The easiest case to consider is the existence of nilpotent zeros of extended polynomials $\varphi(u)$ in $\CZ_n$.  As illustrated in the following lemma, a nonzero polynomial either has no nilpotent zeros or infinitely many of them.

\begin{lemma}\label{nilpotent zeros}
Let $f(z)$ be a nonzero complex polynomial, and let $d$ be the least nonnegative integer such that $f^{(d)}(0)\ne 0$.   Let $\varphi$ denote the zeon extension of $f(z)$ over $\CZ_n$, where $n\ge 1$.  If $d\in\{0,1\}$, then $\varphi$ has no nilpotent zeros.  If $d\ge 2$, then $\varphi$ has infinitely many nilpotent zeros in $\CZ_n$.
\end{lemma}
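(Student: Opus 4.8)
The plan is to reduce the vanishing of $\varphi$ on nilpotents to a single power condition on the argument, by transporting the factorization of $f$ at the origin into $\CZ_n$. First I would record that for any $u$ the zeon extension is literally $\varphi(u)=\sum_i a_i u^i = f(u)$, and that evaluation $z\mapsto u$ is a $\mathbb{C}$-algebra homomorphism $\mathbb{C}[z]\to\CZ_n$ because $\CZ_n$ is commutative. By the definition of $d$, the Taylor expansion of $f$ at $0$ satisfies $a_0=\cdots=a_{d-1}=0$ and $a_d\ne0$, so in $\mathbb{C}[z]$ we may write $f(z)=z^{d}g(z)$ with $g(z)=\sum_{k\ge0}a_{d+k}z^{k}$ and $g(0)=a_d\ne0$. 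Applying the evaluation homomorphism gives $\varphi(u)=u^{d}\,g(u)$ for every $u\in\CZ_n$.

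Next I would restrict to nilpotent arguments $u\in{\CZ_n}^\circ$, where $\zco u=0$, and note that then $\zco g(u)=a_d\ne0$, since every term $a_{d+k}u^{k}$ with $k\ge1$ contributes no scalar part. By Proposition~\ref{zeon inverses}, $g(u)$ is therefore invertible. Multiplying $\varphi(u)=u^{d}g(u)$ on both sides by $g(u)^{-1}$ shows the crux of the argument: among nilpotent elements, $\varphi(u)=0$ if and only if $u^{d}=0$. This converts the polynomial condition into the purely algebraic one $u^{d}=0$, after which the three cases are immediate.

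If $d=0$ then $\varphi(u)=g(u)$ is invertible, hence never $0$, so $\varphi$ has no nilpotent zeros. If $d=1$ then $u^{d}=u$, and $u^{d}=0$ forces $u=0$, so the only nilpotent zero is the trivial element. If $d\ge2$, I would exhibit an explicit infinite family witnessing the claim: since $n\ge1$, for each $c\in\mathbb{C}$ set $u_c=c\,\zeta_{\{1\}}$. Each $u_c$ is nilpotent with $u_c^{\,2}=c^{2}\zeta_{\{1\}}^{\,2}=0$, hence $u_c^{\,d}=0$ and $\varphi(u_c)=0$; as $c$ ranges over $\mathbb{C}$ these are pairwise distinct, giving infinitely many nilpotent zeros.

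The only genuine obstacle is securing the reduction $\varphi(u)=0\iff u^{d}=0$; once $f(z)=z^{d}g(z)$ is transported to $\varphi(u)=u^{d}g(u)$ and the invertibility of $g(u)$ is obtained from Proposition~\ref{zeon inverses}, the remainder is bookkeeping and an explicit construction. I would also flag the harmless boundary point that $0$ is itself nilpotent and is a zero of $\varphi$ whenever $d\ge1$; accordingly the dichotomy is cleanest read as a statement about \emph{nonzero} nilpotent zeros, and the family $\{u_c:c\ne0\}$ supplies infinitely many of these in the case $d\ge2$.
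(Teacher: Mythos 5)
Your proof is correct, but it reaches the conclusion by a genuinely different route than the paper. The paper's proof of the $d\in\{0,1\}$ cases is a grade-counting (no-cancellation) argument: writing $f(z)=a_mz^m+\cdots+a_dz^d$, it notes that for a nilpotent $u\ne 0$ each power $u^\ell$ is either zero or has minimum grade at least $\ell\,\natural u$, so the nonzero scalar term ($d=0$) or the grade-$\natural u$ part of the linear term ($d=1$) survives in $\varphi(u)$, whence $\varphi(u)\ne 0$; for $d\ge 2$ it exhibits the family $a\zeta_I$, essentially your $c\,\zeta_{\{1\}}$. You instead factor $f(z)=z^d g(z)$ with $g(0)=a_d\ne 0$, push this through the evaluation homomorphism to get $\varphi(u)=u^d g(u)$, and invoke Proposition~\ref{zeon inverses} to see that $g(u)$ is invertible whenever $u$ is nilpotent (its scalar part is $a_d$), which yields the sharper dichotomy: among nilpotent $u$, $\varphi(u)=0$ if and only if $u^d=0$. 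This buys a uniform treatment of all three cases and a complete description of the nilpotent zero set rather than just its cardinality, and it is the same factorization-plus-invertibility technique the paper itself deploys later in Proposition~\ref{complex zeros}; what the paper's argument buys in exchange is self-containedness, using only the blade product rule and no inversion machinery. Your closing caveat is also a genuine (if minor) catch: when $d=1$ one has $f(0)=0$, so $u=0$ is itself a nilpotent zero of $\varphi$, and the lemma's claim of ``no nilpotent zeros'' is accurate only for nonzero nilpotent elements --- an edge case the paper's proof passes over by tacitly assuming $\natural u>0$.
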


\begin{proof}
Assuming $d$ is the least nonnegative integer such that $f^{(d)}(0)\ne 0$, it is clear that $f(z)$ can be written as $f(z)=a_mz^m+a_{m-1}z^{m-1}+\cdots+a_dz^d$.  

If $d=0$ or $d=1$, let $u$ be any nilpotent element of $\CZ_n$.  For any positive integer $\ell$, the minimum grade of $u^\ell$ is either zero or at least $\ell\natural u$.   Hence, no cancellation of the nonzero scalar term can occur in the $d=0$ case, and the grade-$\natural u$ part of $u$ is preserved by the degree-$1$ terms of the polynomial in the $d=1$ case.   Thus, $\natural \varphi(u)\ge \natural u>0$.

When $d\ge 2$, let $u=a\zeta_I$ for nonzero scalar $a$ and multi-index $I\ne\varnothing$.    Then, $\varphi(u)=a_mu^m \cdots+a_du^d=0$.
\end{proof}

While nilpotent zeros provide an easy special case, discussion now turns to more general zeros of zeon polynomials.  
  
\begin{proposition}\label{complex zeros}
Let $f$ be a nonzero polynomial with complex coefficients, and let $\varphi$ be its zeon extension.  If $f$ has no complex zeros, then $\varphi$ has no zeros.  If $r$ is a complex zero of $f$, then any  zeon element $w$ satisfying $\zco w=r$ and $\kappa(\mathfrak{D}w)\le \mu_f(r)$ is a zero of $\varphi$.  Moreover, the zeros of $\varphi$ are 
\begin{equation*}
\varphi^{-1}(0)=\{w\in{\CZ_n}^\times: f(\zco w)=0, \kappa(\mathfrak{D}w)\le\mu_f(\zco w)\}.
\end{equation*}
\end{proposition}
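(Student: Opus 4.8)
The plan is to exploit two facts: that taking the scalar part is a homomorphism of $\mathbb{C}$-algebras, and that a polynomial admits an algebraic Taylor expansion valid in any commutative $\mathbb{C}$-algebra. First I would record the decisive observation that $\zco:\CZ_n\to\mathbb{C}$ is a $\mathbb{C}$-algebra homomorphism: checking grade-$0$ parts shows $\zco(uv)=\zco u\,\zco v$, since every other summand of $uv$ has grade at least $1$. Hence for any $w\in\CZ_n$ one has $\zco(\varphi(w))=f(\zco w)$, so a necessary condition for $\varphi(w)=0$ is $f(\zco w)=0$; that is, $\zco w$ must be a complex zero of $f$. This immediately settles the first assertion: if $f$ has no complex zeros, then no $w$ satisfies $f(\zco w)=0$, whence $\varphi^{-1}(0)=\varnothing$.

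Next I would fix a complex zero $r$ of $f$, write $\ell=\mu_f(r)$, and for $w$ with $\zco w=r$ set $h=\mathfrak{D}w$, a nilpotent element commuting with all of $\CZ_n$. Since $f$ is a polynomial and $\CZ_n$ is a commutative $\mathbb{C}$-algebra, the formal identity $f(r+X)=\sum_{j=0}^{m}\frac{f^{(j)}(r)}{j!}X^{j}$ persists upon substituting $X=h$, giving
\[
\varphi(w)=f(r+h)=\sum_{j=0}^{m}\frac{f^{(j)}(r)}{j!}\,h^{j}.
\]
The multiplicity characterization $f(r)=\cdots=f^{(\ell-1)}(r)=0$ with $f^{(\ell)}(r)\ne0$ collapses this to $\varphi(w)=\sum_{j=\ell}^{m}\frac{f^{(j)}(r)}{j!}h^{j}$. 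For the sufficiency direction (the second assertion, and the inclusion $\supseteq$ in the displayed description), note that $\kappa(h)\le\ell$ forces $h^{\ell}=0$, hence $h^{j}=0$ for every $j\ge\ell$, so $\varphi(w)=0$.

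The reverse inclusion $\subseteq$ is the step I expect to demand the most care. Given $\varphi(w)=0$, the homomorphism observation yields $f(r)=0$ with $r=\zco w$, so the collapsed expansion applies; factoring out $h^{\ell}$ gives
\[
0=\varphi(w)=h^{\ell}\Bigl(\tfrac{f^{(\ell)}(r)}{\ell!}+\sum_{j=\ell+1}^{m}\tfrac{f^{(j)}(r)}{j!}\,h^{\,j-\ell}\Bigr).
\]
The bracketed factor has nonzero scalar part $f^{(\ell)}(r)/\ell!$, so it is invertible by Proposition \ref{zeon inverses}. Multiplying by its inverse—precisely the device used in the proof of the corollary to Theorem \ref{zeon roots}, namely that the product of an invertible element and a nilpotent one vanishes only when the nilpotent factor is zero—forces $h^{\ell}=0$, i.e.\ $\kappa(\mathfrak{D}w)\le\ell=\mu_f(\zco w)$. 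Combining the two inclusions then delivers the stated description of $\varphi^{-1}(0)$.

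Two remarks on execution. The crux is the factoring argument above: it converts the vanishing of a sum of powers of $h$ into the single relation $h^{\ell}=0$, and it is the invertibility of the leading bracket (guaranteed by $f^{(\ell)}(r)\ne0$) that makes this rigorous. Finally, the invertibility constraint $w\in{\CZ_n}^{\times}$ in the displayed set restricts attention to zeros with nonzero scalar part $\zco w=r\neq 0$; this is consistent with, and complementary to, the separate treatment of nilpotent zeros (the case $r=0$) already carried out in Lemma \ref{nilpotent zeros}.
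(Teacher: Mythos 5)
Your proof is correct, but it takes a genuinely different route from the paper's. The paper invokes the Fundamental Theorem of Algebra to factor $f$ completely as $f(x)=\prod_{i}(x-r_i)^{\mu_f(r_i)}g(x)$ with $g$ zero-free, evaluates this multiplicative form at $u$ with $\zco u=r$, and observes that every factor except $(u-r)^{\mu_f(r)}=(\mathfrak{D}u)^{\mu_f(r)}$ is invertible (each $(\rho_i+\mathfrak{D}u)$ has nonzero scalar part $\rho_i=r-r_i$, and $\gamma(u)$ has scalar part $g(r)\ne0$), so that $\varphi(u)=0$ if and only if $(\mathfrak{D}u)^{\mu_f(r)}=0$. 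You instead work locally at $r$: the algebraic Taylor expansion $f(r+h)=\sum_j \frac{f^{(j)}(r)}{j!}h^j$, combined with the derivative characterization of multiplicity stated in the paper just before the proposition, collapses $\varphi(w)$ to $h^{\ell}$ times a cofactor with scalar part $f^{(\ell)}(r)/\ell!\ne0$, and the same closing device (a nilpotent times an invertible element vanishes only if the nilpotent factor does, via Proposition \ref{zeon inverses}) finishes. Both proofs hinge on exhibiting $\varphi(w)$ as $(\mathfrak{D}w)^{\mu_f(r)}$ times an invertible element; the difference is how that cofactor is produced. Your version is more self-contained --- it needs no global root factorization and hence no appeal to the FTA, only the polynomial Taylor identity --- whereas the paper's factorization makes the invertibility of the cofactor visible at a glance as a product of explicitly invertible factors. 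Your closing remark on the ${\CZ_n}^{\times}$ constraint is also apt: your argument actually establishes the characterization for all $w$, including those with $\zco w=0$, the case the paper delegates to Lemma \ref{nilpotent zeros}.
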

  
\begin{proof}
First, if $f$ has no complex zero, then $\zco(\varphi(u))=f(\zco u)$ guarantees that $\varphi$ has no zero.  Otherwise, suppose $f$ has distinct complex zeros $r_1, \ldots, r_k$.  It follows that $f$ can be written multiplicatively as  $f(x)=\displaystyle\prod_{i=1}^k (x-r_i)^{\mu_f(r_i)}g(x)$, where $g(x)$ is a constant or polynomial over $\mathbb{R}$ having no complex zeros.  

Next, let $r$ be a complex zero of $f$ and label any remaining complex zeros as $r_1, \ldots, r_{k-1}$.  Let $u\in{\CZ_n}$ satisfy $\zco u=r$.  Writing $\rho_i=\zco u - r_i$ for each $i$, and letting $\gamma$ be the zeon extension of $g$, the multiplicative form of the zeon extension $\varphi$ of $f$ evaluated at $u$ can now be written as  
\begin{eqnarray*}
\varphi(u)&=&\displaystyle(u-r)^{\mu_f(r)}\prod_{i=1}^{k-1} (u-r_i)^{\mu_f(r_i)}\gamma(u)\\
&=&\displaystyle(\zco u-r+\mathfrak{D}u)^{\mu_f(r)}\prod_{i=1}^{k-1} (\zco u-r_i+\mathfrak{D}u)^{\mu_f(r_i)}\gamma(u)\\
&=& (\mathfrak{D}u)^{\mu_f(r)}\prod_{i=1}^{k-1}(\rho_i+\mathfrak{D}u)^{\mu_f(r_i)}\gamma(u).
\end{eqnarray*}
Note that as an element of ${\CZ_n}$, $\gamma(u)$ has a multiplicative inverse;  otherwise, $\gamma(\zco u)=g(\zco u)=0$, contradicting the assumption that $g$ has no complex zeros.  For each $i$,  $(\rho_i+\mathfrak{D}u)$ is similarly  invertible, so the product \hfill\break $\displaystyle\prod_{i=1}^{k-1}(\rho_i+\mathfrak{D}u)^{\mu_f(r_i)}g(u)$ is invertible.    Consequently, $\varphi(u)=0$ if and only if $(\mathfrak{D}u)^{\mu_f(r)}=0$.  In other words, $\varphi(u)=0$ if and only if both $f(\zco u)=0$ and $\kappa(\mathfrak{D}u)\le \mu_f(r)$ are true.   
\end{proof}
  
\begin{remark}\label{simple roots example}
If $r$ is a simple zero of $f$, then the only element $u\in{\CZ_n}$ satisfying $\zco u=r$ and $\varphi(u)=0$ is $u=r$.
\end{remark}

\begin{example}
The roots of $f(z)=z^2+z$ are $z=0$ and $z=-1$.  Since both are simple, Remark \ref{simple roots example} states that the only zeros of $\varphi(u)=0$ are $u=0$ and $u=-1$.  Clearly, these are solutions, and according to Proposition \ref{complex zeros}, any other zero should be of the form   $u=\mathfrak{D}u$ or $u=-1+\mathfrak{D}u$ where $\kappa(\mathfrak{D}u)\le1$.  Clearly, this holds only for $\mathfrak{D}u=0$.
\end{example}

An immediate corollary is obtained when $f$ has a zero of multiplicity two or greater.

\begin{corollary}
Let $f$ be a nonzero polynomial function with complex coefficients and let $\varphi$ be its zeon extension.  If $f(r)=f'(r)=0$ for some $r\in\mathbb{R}$, then $\varphi$ has infinitely many zeros in ${\CZ_n}$ $(n\ge 1)$.  
\end{corollary}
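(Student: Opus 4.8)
The plan is to recognize the hypothesis $f(r)=f'(r)=0$ as the statement that $r$ is a zero of $f$ of multiplicity at least two, and then to invoke Proposition \ref{complex zeros} to manufacture an infinite family of zeon zeros all sharing scalar part $r$.

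First I would observe that, by the derivative characterization of multiplicity recalled at the start of Section \ref{complex coefficients}, the conditions $f(r)=0$ and $f'(r)=0$ together force $\mu_f(r)\ge 2$. Since $r\in\mathbb{R}\subseteq\mathbb{C}$, this makes $r$ a genuine complex zero of $f$, so Proposition \ref{complex zeros} applies and describes $\varphi^{-1}(0)$ explicitly.

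Next, Proposition \ref{complex zeros} guarantees that every zeon $w$ with $\zco w=r$ and $\kappa(\mathfrak{D}w)\le\mu_f(r)$ lies in $\varphi^{-1}(0)$. Because $\mu_f(r)\ge 2$, it suffices to produce infinitely many $w$ whose dual part squares to zero. I would exhibit the one-parameter family $w_c=r+c\,\zeta_{\{1\}}$ for $c\in\mathbb{C}$, which is available for every $n\ge 1$ since $\zeta_{\{1\}}$ is always a generator. Each $w_c$ satisfies $\zco w_c=r$ and $(\mathfrak{D}w_c)^2=c^2\,{\zeta_{\{1\}}}^2=0$ by the null-square property of the generators, so $\kappa(\mathfrak{D}w_c)\le 2\le\mu_f(r)$ and hence $\varphi(w_c)=0$.

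Finally I would note that distinct values of $c$ yield distinct zeons $w_c$, so $\{w_c:c\in\mathbb{C}\}$ is an infinite subset of $\varphi^{-1}(0)$, which establishes the claim. I expect no real obstacle: the substantive content is entirely front-loaded into Proposition \ref{complex zeros}, and the only point to verify is that a multiplicity-two zero admits a nontrivial admissible nilpotent perturbation, a fact the null-square generators supply at once. The choice $r\in\mathbb{R}$ in the statement plays no special role beyond placing $r$ inside $\mathbb{C}$, and the same argument would work verbatim for any complex $r$ with $\mu_f(r)\ge 2$.
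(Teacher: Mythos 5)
Your proposal is correct and follows essentially the same route as the paper: both recognize that $f(r)=f'(r)=0$ forces $\mu_f(r)\ge 2$ and then apply Proposition \ref{complex zeros} to the one-parameter family $r+a\zeta_{\{1\}}$, whose dual part squares to zero. Your write-up merely spells out the nilpotency check and the distinctness of the family, details the paper leaves implicit.
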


\begin{proof}
If $f(r)=f'(r)=0$, then $\mu_f(r)\ge 2$.  Letting $w=r+a\zeta_{\{1\}}$ for any nonzero $a\in\mathbb{R}$, Proposition \ref{complex zeros} implies $\varphi(w)=0$.
\end{proof}

\begin{example}
Consider the polynomial $f(x)=x^2+x+1/4$.   The nontrivial zeon solutions of $\varphi(u)=0$ in $\CZ_1$ are seen to be $\{a\zeta_{\{1\}}-1/2: a\ne 0\}$.
\end{example}

Proposition \ref{complex zeros} also implies that nonzero polynomials having complex zeros of multiplicity exceeding $n$ have infinitely many zeros in ${\CZ_n}$, since the condition $\kappa(\mathfrak{D}w)\le \mu_f(\zco w)$ is satisfied automatically when $\mu_f(\zco w)>n$.  More formally, we have the following corollary.

\begin{corollary}
Let $f$ be a nonzero polynomial function with complex coefficients and let $\varphi$ be its zeon extension.  If $r$ is a zero of multiplicity $m$ of $f$ and $n< m$, then $\varphi(w)=0$ for any $w\in{\CZ_n}$ such that $\zco w=r$.  
\end{corollary}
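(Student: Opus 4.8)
The plan is to obtain this corollary as a direct specialization of Proposition \ref{complex zeros}. Since $r$ is a zero of $f$, any $w\in\CZ_n$ with $\zco w=r$ automatically satisfies the first condition $f(\zco w)=0$ appearing in that proposition. The only remaining task is to verify the second condition, $\kappa(\mathfrak{D}w)\le\mu_f(\zco w)=m$, and to do so I would show that it holds \emph{for every} such $w$ precisely because the index of nilpotency of a dual part is uniformly bounded by the number of generators.

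Concretely, the key step is the observation that $\kappa(\mathfrak{D}w)\le n+1$ for every $w\in\CZ_n$. This follows from the grading of $\CZ_n$ together with the blade product rule \eqref{blade product}: since $\mathfrak{D}w$ is a sum of blades $\zeta_I$ with $|I|\ge 1$, any nonzero term of $(\mathfrak{D}w)^k$ is a blade $\zeta_{I_1\cup\cdots\cup I_k}$ arising from pairwise disjoint index sets, hence of grade $\sum_j|I_j|\ge k$. Because no blade in $\CZ_n$ has grade exceeding $n$, the power $(\mathfrak{D}w)^{n+1}$ must vanish, giving the claimed bound.

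With this bound in hand, the hypothesis $n<m$ (equivalently $m\ge n+1$) yields
\begin{equation*}
\kappa(\mathfrak{D}w)\le n+1\le m=\mu_f(r),
\end{equation*}
so the second hypothesis of Proposition \ref{complex zeros} is met regardless of the particular dual part of $w$. Applying the proposition then gives $\varphi(w)=0$, as required. I would also note that the argument does not require $w$ to be invertible: the case $r=0$, where $w$ is nilpotent, is handled uniformly, since the factorization in the proof of Proposition \ref{complex zeros} isolates $(\mathfrak{D}w)^{\mu_f(r)}$ against an invertible cofactor and never uses invertibility of $w$ itself.

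Because every ingredient is already available, I do not anticipate a genuine obstacle; the one point demanding care is the uniform bound $\kappa(\mathfrak{D}w)\le n+1$, which must be derived from the grading and the blade product rule rather than merely asserted. Once that bound is in place, the corollary is routine bookkeeping on top of Proposition \ref{complex zeros}.
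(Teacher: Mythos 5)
Your proposal is correct and takes essentially the same route as the paper: the paper deduces the corollary immediately from Proposition \ref{complex zeros}, remarking that the condition $\kappa(\mathfrak{D}w)\le\mu_f(\zco w)$ is satisfied automatically when $\mu_f(\zco w)>n$, which is precisely the uniform bound $\kappa(\mathfrak{D}w)\le n+1\le m$ that you establish. Your grading argument for $(\mathfrak{D}w)^{n+1}=0$ merely makes explicit what the paper leaves implicit.
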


\begin{example}
Consider the polynomial $f(x)=(x-1)^4$.   Writing $w=1+\mathfrak{D}w\in\CZ_2$, then $\kappa(\mathfrak{D}w)\le 3<4=\mu_f(1)$ and $\varphi(w)=0$. 
\end{example}

\section{Complex zeon polynomials}\label{zeon FTA}

Generally, the Fundamental Theorem of Algebra does not hold for zeon polynomials.  That is, if $\varphi(u)$ is a nonconstant complex zeon polynomial, then $\varphi(u)=0$ does not necessarily have a complex zeon root.  

For example, it is straightforward to verify that $(u-1)^2+\zeta_{\{1\}}$ has no zeon zeros.  Hence, $\varphi(u)$ need not have a zero even when $\zco(\varphi(u))$ is a nonconstant complex polynomial (which has a zero by the FTA).

On the other hand, it was shown in \cite{DollarStaples} that every invertible zeon element with positive scalar part has a real square root.  Extending to complex zeons,  $(u-1)^2+(a+b\zeta_{\{1\}})$ has zeros for all $a,b\in\mathbb{C}$ where $a\ne 0$.  In this particular example, one sees that $u=i\sqrt{a}+\displaystyle\frac{i b}{2\sqrt{a}}\zeta_{\{1\}}$ is a zero.  

However, as seen in \cite{HaakeStaples}, a zeon quadratic polynomial has solutions if and only if its discriminant has a square root.   The zeon quadratic formula established there for the real case is recalled here and extended to complex zeons.

\begin{theorem}[Zeon Quadratic Formula]\label{ZQF}
Let $\varphi(u)=\alpha u^2+\beta u+\gamma$ be a quadratic function with zeon coefficients from ${\CZ_n}$, where $\zco \alpha\ne 0$.  Let  $\Delta_{\varphi}=\beta^2-4\alpha\gamma$ denote the zeon discriminant of $\varphi$.  The zeros of $\varphi$ are given by 
\begin{equation*}
\varphi^{-1}(0)=\left\{\frac{\alpha^{-1}}{2}(w-\beta): w^2=\beta^2 - 4\alpha\gamma\right\}.
\end{equation*}
In particular, 
\begin{enumerate}[(i.)]
\item If $\Delta_{\varphi}=0$, then the zeros of $\varphi$ are given by $u=-\alpha^{-1}\beta/2+\eta$ for any $\eta\in{\CZ_n}$ satisfying $\eta^2=0$.  \label{p1}
\item If $\zco\Delta_{\varphi}\ne 0$, then $\varphi(u)=0$ has two distinct solutions.\label{p2}
\item If $\Delta_{\varphi}\ne 0$ is nilpotent and $\varphi(u)=0$ has a solution, then it has infinitely many solutions.  \label{p3}
\end{enumerate}
\end{theorem}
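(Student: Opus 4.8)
The plan is to reduce the quadratic equation $\alpha u^2 + \beta u + \gamma = 0$ to a pure square-root extraction by completing the square. Since $\zco\alpha \neq 0$, Proposition \ref{zeon inverses} guarantees that $\alpha$ is invertible, so I may multiply through by $\alpha^{-1}$ and work with the monic form $u^2 + \alpha^{-1}\beta u + \alpha^{-1}\gamma = 0$. Completing the square then rewrites this as $\bigl(u + \tfrac{1}{2}\alpha^{-1}\beta\bigr)^2 = \tfrac{1}{4}\alpha^{-2}(\beta^2 - 4\alpha\gamma) = \tfrac{1}{4}\alpha^{-2}\Delta_\varphi$. Writing $w = \alpha(2u + \alpha^{-1}\beta) = 2\alpha u + \beta$, equivalently $u = \tfrac{1}{2}\alpha^{-1}(w - \beta)$, this becomes exactly the condition $w^2 = \Delta_\varphi$. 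This establishes the displayed formula for $\varphi^{-1}(0)$, since each zero $u$ corresponds bijectively (via the invertible affine map $u \mapsto 2\alpha u + \beta$) to a solution $w$ of $w^2 = \Delta_\varphi$. The three enumerated cases then follow by analyzing when $w^2 = \Delta_\varphi$ has solutions and how many.

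For case \eqref{p1}, when $\Delta_\varphi = 0$ the equation $w^2 = 0$ has solution set exactly the nilpotents $\eta$ with $\eta^2 = 0$ (writing $w = \eta$); substituting back through $u = \tfrac{1}{2}\alpha^{-1}(w - \beta)$ and absorbing the invertible factor $\tfrac{1}{2}\alpha^{-1}$ into a renamed nilpotent square-root-of-zero gives the stated form $u = -\tfrac{1}{2}\alpha^{-1}\beta + \eta$ with $\eta^2 = 0$. For case \eqref{p2}, when $\zco\Delta_\varphi \neq 0$ the discriminant is invertible, so by the Corollary following Theorem \ref{zeon roots} it has exactly two distinct square roots $\pm w_0$; these yield two distinct values of $u$, and distinctness is preserved because $u \mapsto 2\alpha u + \beta$ is injective. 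For case \eqref{p3}, if $\Delta_\varphi \neq 0$ is nilpotent and at least one solution $w_0$ of $w^2 = \Delta_\varphi$ exists, I would exhibit infinitely many others: for any nilpotent $\eta$ with $\eta^2 = 0$ and $w_0\eta = 0$, the element $w_0 + \eta$ also squares to $\Delta_\varphi$, and since $\Delta_\varphi$ is nilpotent (hence $w_0$ is nilpotent, lying in the ideal ${\CZ_n}^\circ$ whose elements are annihilated by suitable generators), there are infinitely many such $\eta$ available whenever $n \ge 1$.

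The routine completing-the-square algebra is straightforward in the commutative zeon setting, and cases \eqref{p1} and \eqref{p2} follow directly from the root theory already established. I expect the main obstacle to be case \eqref{p3}: unlike the invertible case, nilpotent $\Delta_\varphi$ need not possess any square root at all (this is precisely the phenomenon illustrated by $(u-1)^2 + \zeta_{\{1\}}$, where the discriminant-type obstruction blocks all zeros), so the hypothesis that a solution \emph{exists} is essential. The care required is to show that \emph{given} one solution, the nilpotent structure of $\Delta_\varphi$ forces the existence of infinitely many perturbations $\eta$ with $\eta^2 = 0$ and $w_0\eta = 0$ — that is, to verify that the annihilator of $w_0$ within the square-zero nilpotents is infinite. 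This should follow from a dimension count: $w_0$ is a nilpotent with $\zco w_0 = 0$, so scaling any suitable generator $\zeta_{\{j\}}$ not appearing in the support of $w_0$'s lowest-grade part (or an appropriate blade orthogonal to it) produces a one-parameter family of valid $\eta$, giving the infinitude.
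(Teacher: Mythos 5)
Your completing-the-square reduction, the affine correspondence $u=\frac{\alpha^{-1}}{2}(w-\beta)\leftrightarrow w=2\alpha u+\beta$ between zeros of $\varphi$ and square roots of $\Delta_\varphi$, and your treatment of cases (i.) and (ii.) coincide with the paper's proof essentially step for step. The genuine gap is in case (iii.): the concrete recipe you give for producing infinitely many perturbations fails. You correctly reduce the claim to exhibiting infinitely many $\eta$ with $\eta^2=0$ and $w_0\eta=0$, but you then propose $\eta=a\zeta_{\{j\}}$ for a generator $j$ \emph{not} appearing in the support of the lowest-grade part of $w_0$. By the product rule \eqref{blade product}, $\zeta_I\zeta_{\{j\}}=\zeta_{I\cup\{j\}}\ne 0$ precisely when $j\notin I$, and since $I\mapsto I\cup\{j\}$ is injective on such multi-indices there is no cancellation; hence $w_0\zeta_{\{j\}}=0$ would force $j$ to lie in \emph{every} multi-index in the support of $w_0$ --- the opposite of your condition. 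Concretely, in $\CZ_3$ take $w_0=\zeta_{\{1\}}+\zeta_{\{2\}}$, so $\Delta_\varphi=w_0^2=2\zeta_{\{1,2\}}\ne 0$ is nilpotent and has a square root; your recipe gives $\eta=a\zeta_{\{3\}}$, and then $(w_0+\eta)^2=\Delta_\varphi+2a\left(\zeta_{\{1,3\}}+\zeta_{\{2,3\}}\right)\ne\Delta_\varphi$ for $a\ne 0$. Moreover, no single generator annihilates this $w_0$, so the proposed ``dimension count'' produces nothing here.

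The repair is one line, and it is exactly what the paper does: take $\eta=a\zeta_{[n]}$, the scaled top blade, with $a\in\mathbb{C}$ arbitrary. Since $w_0$ is nilpotent, every term of $w_0$ contains at least one generator, so $w_0\zeta_{[n]}=0$ and $\zeta_{[n]}^2=0$, whence $(w_0+a\zeta_{[n]})^2=w_0^2=\Delta_\varphi$ for all $a$; pushing this one-parameter family through the invertible affine map yields infinitely many zeros of $\varphi$. Your parenthetical ``an appropriate blade orthogonal to it'' gestures toward this, but the argument you actually commit to is the generator version, which is false. With that substitution your case (iii.) closes, and the whole proof coincides with the paper's.
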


\begin{proof}
Writing \begin{equation*}\alpha u^2+\beta u+\gamma = \frac{\alpha^{-1}}{4}((2\alpha u + \beta)^2 - (\beta^2 - 4\alpha\gamma)),
\end{equation*}
it follows that the zeros of the polynomial are precisely the elements \begin{equation*}
\varphi^{-1}(0)=\left\{\frac{\alpha^{-1}}{2}(w-\beta): w^2=\beta^2 - 4\alpha\gamma\right\}.
\end{equation*}  Thus, the problem of finding zeros of the polynomial is essentially reduced to finding square roots of the discriminant.  

To establish Case (\ref{p1}.),  set $u_0=-\alpha^{-1}\beta/2$ and let $\eta\in \CZ_n$ such that $\eta^2=0$.   Letting $w=2\alpha \eta$, it follows that $w^2=0$, and \begin{eqnarray*}
u_0+\eta&=&u_0+\frac{\alpha^{-1}}{2}w\\
&=&-\frac{\alpha^{-1}}{2}\beta+\frac{\alpha^{-1}}{2}w\\
&=&\frac{\alpha^{-1}}{2}(w-\beta)\in\varphi^{-1}(0).
\end{eqnarray*}

Case (\ref{p2}.) is established by noting that any invertible zeon element has two distinct invertible square roots, as established in the corollary to Theorem \ref{zeon roots}. 

In Case (\ref{p3}.), existence of one square root of a nilpotent element guarantees infinitely many square roots because $(w+a \zeta_{[n]})^2=w^2$ for any $a\in\mathbb{C}$ and any nilpotent $w\in \CZ_n$. 
\end{proof}

\begin{example}
The zeon polynomial $\varphi(u)=1+(\zeta_{\{1,2\}}-2)u+(1+\zeta_{\{1\}})u^2$ over $\CZ_2$, which has discriminant \begin{eqnarray*}
\Delta_\varphi&=&(\zeta_{\{1,2\}}-2)^2-4(1+\zeta_{\{1\}})1\\
&=&-4\zeta_{\{1,2\}}-4\zeta_{\{1\}},
\end{eqnarray*}
is thus seen to have no zeros\footnote{Among other properties, squares of zeon elements must be of even grade.}, even though all of its coefficients are invertible.  
\end{example}

In each of the previous examples, $\zco(\varphi(u))$ is a complex polynomial having a multiple root. In fact, when $\zco(\varphi(u))$ has a multiple root $w_0$, $\varphi(u)$ may or may not have a zero $w$ satisfying $\zco(w)=w_0$.   For this reason, discussion is now restricted to simple zeros of polynomials with zeon coefficients.

\subsection{Spectrally Simple Zeros of Complex Zeon Polynomials}

The main result of this section is a useful version of the Fundamental Theorem of Algebra for zeon polynomials.  It not only guarantees the existence of a simple invertible zeon zero of $\varphi(u)$ when the complex polynomial $f(u)=\zco(\varphi(u))$ has a simple invertible zero, but it provides an algorithm for computing such a zero.  

Letting $\varphi(u)$ be a nonconstant monic zeon polynomial, we consider $\lambda\in{\CZ_n}$ to be a {\em simple} zero of $\varphi$ if $\varphi(u)=(u-\lambda)g(\lambda)$ for some zeon polynomial $g$ satisfying $g(\lambda)\ne 0$.

\begin{definition}
A simple zero $\lambda\in{\CZ_n}$ of $\varphi(u)$ is said to be a {\em spectrally simple} if $\zco \lambda$ is a simple zero of the complex polynomial $f=\zco\varphi$.
\end{definition}  

With the notion of spectrally simple zeros in hand, a fundamental theorem of algebra for zeon polynomials can be presented.

\begin{theorem}[Fundamental Theorem of Zeon Algebra]\label{zeon fta}
Let $\varphi(u)$ be a monic polynomial of degree $m$ over $\CZ_n$, and let $f(u)=\zco(\varphi(u))$ be the complex polynomial induced by $\varphi$.  If $\lambda_0\in\mathbb{C}$ is a simple zero of $\zco(\varphi(u))$, let $g(u)$ be the unique complex polynomial satisfying $\zco(\varphi(u))=(u-\lambda_0)g(u)$. It follows that $\varphi(u)$ has a simple zero $\lambda$ such that $\zco\lambda = \lambda_0$.  In particular, for $1\le k\le n$, the grade-$k$ part of $\lambda$ (denoted $\lambda_k$) is given by 
\begin{equation*}
\lambda_k = \frac{1}{g(\lambda_0)}\left< \varphi\left(\sum_{i=0}^{k-1} \lambda_i\right)\right>_k.
\end{equation*}
Moreover, such a zero $\lambda$ is unique.
\end{theorem}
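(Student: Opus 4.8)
The plan is to determine the zero $\lambda$ one grade at a time, exploiting the grading $\CZ_n=\bigoplus_{k=0}^n\langle\CZ_n\rangle_k$ and the fact that $u\mapsto\zco u$ is a ring homomorphism onto $\mathbb{C}$. Writing $\lambda=\lambda_0+\mathfrak{D}\lambda$ with $\mathfrak{D}\lambda=\sum_{k=1}^n\lambda_k$, the scalar part of the equation $\varphi(\lambda)=0$ is automatically $\zco(\varphi(\lambda))=f(\zco\lambda)=f(\lambda_0)=0$, so the grade-$0$ requirement is met by the hypothesis that $\lambda_0$ is a zero of $f$. The remaining task is to choose $\lambda_1,\dots,\lambda_n$ so that $\langle\varphi(\lambda)\rangle_k=0$ for $1\le k\le n$.

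The key structural step is to linearize the grade-$k$ equation in the single unknown $\lambda_k$. Since the $\lambda_i$ commute, I would expand $\varphi$ about the scalar $\lambda_0$ by the (finite) Taylor formula $\varphi(\lambda_0+x)=\sum_{j\ge0}\frac{1}{j!}\varphi^{(j)}(\lambda_0)\,x^j$ with $x=\mathfrak{D}\lambda$, where $\varphi^{(j)}$ is the formal derivative and $\varphi^{(j)}(\lambda_0)\in\CZ_n$. Because $x$ has minimal grade at least $1$, the summand $\frac1{j!}\varphi^{(j)}(\lambda_0)x^j$ contributes to grade $k$ only for $j\le k$, and for $j\ge2$ every factor in a product of homogeneous parts of $x$ summing to grade $k$ has grade at most $k-1$, so $\langle x^j\rangle_k$ involves only $\lambda_1,\dots,\lambda_{k-1}$. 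The part $\lambda_k$ therefore enters $\langle\varphi(\lambda)\rangle_k$ only through the $j=1$ summand, and only via the scalar coefficient $\zco(\varphi'(\lambda_0))=f'(\lambda_0)$. Collecting terms gives the clean identity
\begin{equation*}
\langle\varphi(\lambda)\rangle_k = f'(\lambda_0)\,\lambda_k + \Big\langle\varphi\Big(\textstyle\sum_{i=0}^{k-1}\lambda_i\Big)\Big\rangle_k,
\end{equation*}
whose second term depends only on $\lambda_0,\dots,\lambda_{k-1}$; here I use that $\langle\varphi(w)\rangle_k$ depends only on the grades $\le k$ of $w$, so replacing $\mathfrak{D}\lambda$ by its truncation is harmless. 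Since $\lambda_0$ is a \emph{simple} zero of $f$ we have $f'(\lambda_0)=g(\lambda_0)\ne0$, a nonzero complex scalar and hence invertible in $\CZ_n$, so $\langle\varphi(\lambda)\rangle_k=0$ is uniquely solvable, yielding the recursion $\lambda_k=-g(\lambda_0)^{-1}\big\langle\varphi(\sum_{i=0}^{k-1}\lambda_i)\big\rangle_k$ (the sign being the one forced by $f(u)=(u-\lambda_0)g(u)$).

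With $\lambda:=\lambda_0+\sum_{k=1}^n\lambda_k$ so defined, I would then verify the two remaining assertions. That $\lambda$ is an honest zero follows grade-by-grade from the displayed identity together with $\langle\varphi(\lambda)\rangle_0=f(\lambda_0)=0$. For spectral simplicity, apply the zeon division algorithm (Theorem~\ref{zeon poly division}) to write $\varphi(u)=(u-\lambda)q(u)+r$ with $r\in\CZ_n$ constant; the zeon remainder theorem gives $r=\varphi(\lambda)=0$, whence $\varphi(u)=(u-\lambda)q(u)$. Differentiating, $q(\lambda)=\varphi'(\lambda)$, and since $\zco$ is a homomorphism, $\zco(q(\lambda))=f'(\lambda_0)\ne0$; thus $q(\lambda)$ is invertible by Proposition~\ref{zeon inverses}, in particular nonzero, so $\lambda$ is simple, and it is spectrally simple because $\zco\lambda=\lambda_0$ is a simple zero of $f$.

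Uniqueness I would obtain from the determinism of the recursion: the constraint $\zco\lambda=\lambda_0$ fixes the grade-$0$ part, and an induction on $k$ using the displayed identity shows that any zero with scalar part $\lambda_0$ must share each grade-$k$ part, since $\lambda_k$ is uniquely pinned down by $\lambda_0,\dots,\lambda_{k-1}$ and the invertibility of $g(\lambda_0)$. The main obstacle is the bookkeeping in the second paragraph—establishing rigorously that $\lambda_k$ enters the grade-$k$ equation only linearly and only through $f'(\lambda_0)$, and that the residual is exactly the grade-$k$ part of $\varphi$ at the partial sum; once this linearization is in place, invertibility of the scalar $g(\lambda_0)$ renders the recursion and all three conclusions routine.
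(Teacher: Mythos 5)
Your proof is correct, and it realizes the same underlying idea as the paper's --- Newton--Hensel lifting of the simple scalar zero $\lambda_0$, with every correction scaled by $-1/g(\lambda_0)$ --- but through a genuinely different technical engine. The paper first applies zeon polynomial division (Theorem \ref{zeon poly division}) to write $\varphi(u)=(u-\lambda_0)q(u)+r(u)$ with $r$ a zeon constant, then iterates $\lambda\mapsto\lambda+\nu\,\varphi(\lambda)_\natural$ with $\nu=-1/g(\lambda_0)$, using that decomposition to show the \emph{minimal} grade $\natural\varphi(\lambda)$ strictly increases at each step, so the process halts after at most $n$ steps; uniqueness is then argued separately by factoring $\varphi(u)=(u-w)q(u)$ and invoking invertibility of $q(v)$. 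You instead expand $\varphi$ by the finite Taylor formula about $\lambda_0$ and establish the grade-$k$ linearization $\langle\varphi(\lambda)\rangle_k=g(\lambda_0)\lambda_k+\langle\varphi(\sum_{i=0}^{k-1}\lambda_i)\rangle_k$, whose justification (only the $j=1$ Taylor term can contribute $\lambda_k$ to grade $k$; grades above $k$ of the argument never affect $\langle\varphi(\cdot)\rangle_k$) is sound. This buys three things: the statement's formula is obtained literally, indexed by grade, rather than via a minimal-grade iteration that may skip grades; uniqueness comes for free from the determinism of the recursion; and simplicity of $\lambda$ gets an explicit verification ($q(\lambda)=\varphi'(\lambda)$ has $\zco(q(\lambda))=f'(\lambda_0)=g(\lambda_0)\ne 0$) that the paper's existence argument leaves implicit. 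Note also that your recursion carries a minus sign, $\lambda_k=-g(\lambda_0)^{-1}\langle\varphi(\sum_{i=0}^{k-1}\lambda_i)\rangle_k$; this agrees with $\nu=-1/g(\lambda_0)$ in the paper's proof and with the subtraction step $\lambda\leftarrow\lambda-\xi$ in Algorithm \ref{GreedyZeonAlgorithm}, so the sign-free display in the theorem statement is a typo that your derivation silently corrects. What the paper's route buys in exchange is economy: tracking only the minimal-grade part reuses the division machinery of Section \ref{poly div} wholesale and corresponds exactly to the algorithm as implemented.
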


\begin{proof}
Assuming that $\lambda_0$ is a simple zero of $\zco(\varphi(u))$, one can write \begin{equation*}
\zco(\varphi(u))=(u-\lambda_0)g(u)
\end{equation*}
where $g$ is a complex polynomial of degree $m-1$ satisfying $g(\lambda_0)\ne 0$.

Applying the polynomial division algorithm, there exist zeon polynomials $q$ and $r$ such that  
\begin{equation*}
\varphi(u)=(u-\lambda_0)q(u)+r(u)
\end{equation*}
where $r(u)$ is either zero or a zeon constant. If $r(u)=0$, then $\lambda_0$ is a complex zero of $\varphi(u)$, and the proof is complete.  Otherwise, it follows that 
\begin{equation*}
\varphi(\lambda_0)=r(\lambda_0)
\end{equation*}
and further, that $\zco (r(\lambda_0))=g(\lambda_0)\ne 0$, so that $r(\lambda_0)=r(u)$ is invertible.  

For convenience, set $\nu=-1/g(\lambda_0)$, and let $w_\natural$ denote the minimal grade part of $w\in\CZ_n$; i.e., $w_\natural= \langle w\rangle_{\natural u}$.  Replacing $\lambda_0$ with $\lambda_0 + \nu \varphi(\lambda_0)_\natural$ in the evaluation, we obtain
\begin{eqnarray*}
\varphi(\lambda_0 + \nu \varphi(\lambda_0)_\natural)&=&
\nu\varphi(\lambda_0)_\natural q(\lambda_0 +  \nu \varphi(\lambda_0)_\natural)+r(\lambda_0+\nu\varphi(\lambda_0)_\natural)\\
&=&\nu\varphi(\lambda_0)_\natural\zco(q(\lambda_0 +  \nu \varphi(\lambda_0)_\natural))+r(\lambda_0)\\
&&+\nu\varphi(\lambda_0)_\natural\zdu(q(\lambda_0 +  \nu \varphi(\lambda_0)_\natural).
\end{eqnarray*}
Letting $\ell_0=\natural \varphi(\lambda_0)$, 
\begin{eqnarray*}
\langle\varphi(\lambda_0 + \nu \varphi(\lambda_0)_\natural)\rangle_{\ell_0}
&=&\nu\varphi(\lambda_0)_\natural\zco(q(\lambda_0 +  \nu \varphi(\lambda_0)_\natural))+\langle r(\lambda_0)\rangle_{\ell_0}\\
&=&\nu\varphi(\lambda_0)_\natural g(\lambda_0)+\varphi(\lambda_0)_\natural\\
&=&-\varphi(\lambda_0)_\natural +\varphi(\lambda_0)_\natural\\
&=&0.
\end{eqnarray*}
It follows that either $\varphi(\lambda_0 + \nu \varphi(\lambda_0)_\natural)=0$ or $\natural \varphi(\lambda_0 + \nu \varphi(\lambda_0)_\natural)>\natural \varphi(\lambda_0)$.  Proceeding inductively by setting \begin{eqnarray*}
\lambda_1&=&\nu \varphi(\lambda_0)_\natural,\\
\lambda_k&=&\nu \varphi(\lambda_0+\lambda_1+\cdots+\lambda_{k-1})_\natural, \,\,(k>1)
\end{eqnarray*}
and setting $\ell_j=\natural \varphi(\lambda_0+\cdots+\lambda_{j-1})$ for $1\le j$,  it follows that 
\begin{eqnarray*}
\langle \varphi(\lambda_0 + \lambda_1+\cdots+\lambda_{k})\rangle_{\ell_{k-1}}&=&\nu\varphi(\lambda_0+\cdots+\lambda_{k-1})_\natural\zco(q(\lambda_0 +\cdots+\lambda_{k-1}))\\
&&+\langle r(\lambda_0+\cdots+\lambda_{k-1})\rangle_{\ell_{k-1}}\\
&=&\nu\varphi(\lambda_0+\cdots+\lambda_{k-1})_\natural g(\lambda_0)\\
&&+\varphi(\lambda_0+\cdots+\lambda_{k-1})_\natural\\
&=&-\varphi(\lambda_0+\cdots+\lambda_{k-1})_\natural +\varphi(\lambda_0+\cdots+\lambda_{k-1})_\natural\\
&=&0.
\end{eqnarray*}
Hence, $\varphi(\lambda_0 + \lambda_1+\cdots+\lambda_{k})=0$ or \begin{equation*}
\natural\varphi(\lambda_0 + \lambda_1+\cdots +\lambda_{k})>\natural\varphi(\lambda_0 + \lambda_1+\cdots +\lambda_{k-1}).
\end{equation*}
Since the number of generators of $\CZ_n$ is $n$, the process terminates with $\lambda=\displaystyle\sum_{k} \lambda_k$ such that $\varphi(\lambda)=0$.
   
Uniqueness of the zero $\lambda$ is trivial if $\lambda\in\mathbb{C}$.  Uniqueness of nontrivial zeon zero $\lambda$ is verified as follows.  Suppose $w=\lambda_0+\zdu w$ and $v=\lambda_0+\zdu v$ satisfy $\varphi(w)=\varphi(v)=0$.  Since $\varphi(w)=0$, one can write $\varphi(u)=(u-w)q(u)$ for monic zeon polynomial $q(u)$.   Further, since $\lambda_0$ is a simple zero of $\zco(\varphi(u))$, it follows that $\zco(q(\lambda_0))\ne 0$.  Now $\varphi(v)=0$ implies 
\begin{equation*}
(v-w)q(v)=0,
\end{equation*}
where $\zco(q(v))\ne 0$ implies that $q(v)$ is an invertible element of $\CZ_n$.  Hence, $v-w=0$.
\end{proof}

\begin{remark}
The requirement that $\varphi(u)$ be monic can be relaxed by simply requiring the leading coefficient $\alpha_m$ of $\varphi$ to be invertible.  In that case, the proposition is applied to the  monic polynomial ${\alpha_m}^{-1}\varphi(u)$.
\end{remark}

\begin{corollary}
Let $\varphi(u)$ be a complex zeon polynomial of degree $m\ge 1$. If $\zco(\varphi(u))$ is a nonconstant complex polynomial whose zeros are all simple, then $\varphi(u)$ has exactly $m$ complex zeon zeros.  In this case, we can say $\varphi$ {\em splits} over $\mathbb{C}\Z_n$.
\end{corollary}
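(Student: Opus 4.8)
The plan is to realize the zeon zeros of $\varphi$ as the fibers of the scalar-part map over the complex roots of $f=\zco(\varphi)$, and then to count those fibers using the existence-and-uniqueness content already packaged in Theorem \ref{zeon fta}. First I would normalize: since $\varphi$ has degree $m$ and $f=\zco(\varphi)$ is nonconstant, I take the leading coefficient to be invertible (replacing $\varphi$ by ${\alpha_m}^{-1}\varphi$ as in the remark following Theorem \ref{zeon fta}), so that $f$ is monic of degree $m$. By the classical Fundamental Theorem of Algebra, $f$ then has $m$ complex roots, and the hypothesis that every zero of $f$ is simple means these $m$ roots are distinct.

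The central observation is that $\zco\colon\CZ_n\to\mathbb{C}$ is a ring homomorphism, so that $\zco(\varphi(w))=f(\zco w)$ for every $w\in\CZ_n$. Consequently, if $w$ is any zeon zero of $\varphi$, then $f(\zco w)=0$, i.e.\ $\zco w$ is one of the $m$ simple roots of $f$. This shows the scalar-part map $w\mapsto \zco w$ sends the zero set $\varphi^{-1}(0)$ into the (size-$m$) root set of $f$. I would then invoke Theorem \ref{zeon fta} twice: its existence half shows this map is onto (each simple root $\lambda_0$ lifts to a zeon zero $\lambda$ with $\zco\lambda=\lambda_0$), while its uniqueness half shows it is one-to-one (for a simple root $\lambda_0$, the zeon zero above it is unique). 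A bijection onto an $m$-element set forces $\varphi$ to have exactly $m$ zeon zeros.

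For the splitting claim I would argue by induction on $m$ using the division machinery of Section \ref{poly div}. Theorem \ref{zeon fta} produces a zeon zero $\lambda^{(1)}$, and the zeon remainder theorem gives $\varphi(u)=(u-\lambda^{(1)})q_1(u)$ with $q_1$ monic of degree $m-1$. Applying $\zco$ coefficientwise and using that $\mathbb{C}[u]$ is an integral domain, I get $\zco(q_1)=f/(u-\zco\lambda^{(1)})$, a monic polynomial of degree $m-1$ whose roots are the remaining $m-1$ simple roots of $f$; thus $q_1$ again satisfies the hypotheses and the induction continues, yielding a complete factorization $\varphi(u)=\prod_{j=1}^m(u-\lambda^{(j)})$ over $\CZ_n$.

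I expect the only real subtlety to be bookkeeping rather than a genuine obstacle: one must confirm that passing from $\varphi$ to the quotient $q_1$ preserves both the degree drop and the ``all simple'' hypothesis for $\zco(q_1)$, and that distinct simple roots of $f$ genuinely yield distinct zeon zeros (immediate, since they have distinct scalar parts). The one place to be careful is the normalization step: the count ``$m$'' is correct precisely because the leading coefficient is invertible, guaranteeing $\deg f=m$; if one only knew $f$ to be nonconstant of some degree $d<m$, the same argument would deliver $d$ zeros rather than $m$, so this hypothesis must be made explicit.
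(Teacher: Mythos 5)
Your counting argument is correct and is essentially the paper's own proof: the paper likewise lifts each simple root of $f=\zco(\varphi(u))$ to a zeon zero via the existence half of Theorem \ref{zeon fta}, with the uniqueness half (together with the observation $\zco(\varphi(w))=f(\zco w)$, which forces every zeon zero to lie over a root of $f$) giving the exact count $m$ --- a point the paper compresses into two sentences and that you usefully spell out as a bijection between $\varphi^{-1}(0)$ and the root set of $f$. Where you go beyond the paper: its proof never actually establishes a factorization $\varphi(u)=\prod_{j=1}^m\bigl(u-\lambda^{(j)}\bigr)$ (``splits'' is introduced there merely as a name for having exactly $m$ zeros), whereas your induction via Theorem \ref{zeon poly division} and the zeon remainder theorem proves genuine splitting; the step is sound because $u-\lambda^{(1)}$ has invertible (indeed unit) leading coefficient, so the quotient $q_1$ is monic of degree $m-1$ and $\zco(q_1)=f(u)/(u-\zco\lambda^{(1)})$ inherits the all-simple-zeros hypothesis. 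Your closing caveat is also well taken and worth making explicit: as literally stated the corollary admits a nilpotent leading coefficient, and then it fails --- e.g.\ $\varphi(u)=\zeta_{\{1\}}u^2+u-1$ over $\CZ_1$ has $\zco(\varphi(u))=u-1$ nonconstant with only simple zeros, yet $\varphi$ has the single zero $1-\zeta_{\{1\}}$ rather than two --- so the intended reading, consistent with the remark preceding the corollary, is that the leading coefficient is invertible, exactly as you assume in your normalization step.
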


\begin{proof}
By Theorem \ref{zeon fta}, $\varphi$ has a zero of the form $w=w_0+\zdu w$, where $w_0\ne 0$ is a zero of the complex polynomial $\zco(\varphi(u))$.  If the zeros of $\zco(\varphi(u))$ are simple, then there exist $m$ such zeros.
\end{proof}

\begin{example}
Observe that for any invertible $\alpha\in \CZ_n$, the zeon polynomial $\varphi_\alpha(u)=u^k+\alpha$ has exactly $k$ distinct complex zeon roots.
\end{example}

\subsubsection{Computing spectrally simple zeon zeros}

Presented here is an algorithm for finding an invertible, spectrally simple zeon zero $\lambda$ of polynomial $\varphi(u)$.

\begin{algorithm}[ht]
\SetKwInOut{Input}{input}\SetKwInOut{Output}{output} 
\Input{Zeon polynomial $\varphi(u)$ over $\CZ_n$ and a simple nonzero root $\lambda_0$ of the associated complex polynomial $\zco(\varphi(u))$.}
\Output{Zeon zero $\lambda$ of $\varphi(u)$ with $\zco\lambda=\lambda_0$. } 
\BlankLine
\emph{Initialize complex polynomial $g(u)$.}
\BlankLine
$g(u)\leftarrow \displaystyle\frac{\zco(\varphi(u))}{u-\lambda_0}$\;
\BlankLine
\emph{Note $g(u)$ satisfies $\varphi(u)=(u-\lambda_0)g(u)$, where $g(\lambda_0)\ne 0$.}
\BlankLine
$\xi\leftarrow\, \displaystyle\varphi(\lambda_0)_\natural/g(\lambda_0)$\;
$\lambda\leftarrow\lambda_0-\xi$;
\BlankLine
\While{$0<\natural \xi\le n$ }{
\BlankLine
$\xi\leftarrow\, \displaystyle\varphi(\lambda)_\natural/g(\lambda_0)$\;
\BlankLine
$\lambda\leftarrow(\lambda-\xi)$;
}
\Return$\lambda$\;
\BlankLine
\caption{Compute spectrally simple invertible zeon zero.\label{GreedyZeonAlgorithm}} 
\end{algorithm}

\begin{example}
Consider the following zeon polynomial $\varphi(u)$ over $\Z_4$: \begin{eqnarray*}
\varphi(u)&=&u^4-6 u^3+\left(-\zeta _{\{1,2\}}-\zeta _{\{1,3\}}-\zeta _{\{1,4\}}+12\right)u^2\\
&&+\left(2 \zeta _{\{1,2\}}+2 \zeta _{\{1,3\}}+2 \zeta _{\{1,4\}}-10\right)u +3-\zeta _{\{1,2\}}-\zeta _{\{1,3\}}-\zeta _{\{1,4\}}.
\end{eqnarray*}
The associated scalar polynomial is $\zco \varphi(u)=u^4-6 u^3+12 u^2-10 u+3$, which has zeros 3 (of multiplicity 1) and 1 (of multiplicity 3).  The unique complex zeon zero $u_0$ such that $\zco(u_0)=3$ is \begin{equation*}
u_0=3+\frac{1}{2}\left(\zeta _{\{1,2\}}+\zeta _{\{1,3\}}+\zeta _{\{1,4\}}\right). 
\end{equation*}
\end{example}

\subsection{Zeon extensions of analytic functions and their inverses}

Extending results from the real case~\cite{staplesweygandt}, the domain of an analytic function $f:A\subseteq\mathbb{C}\to\mathbb{C}$ can be extended to $A\oplus {\CZ_n}^\circ$ by defining \begin{equation*}
\varphi(u)=\sum_{k=0}^n\frac{f^{(k)}(\zco u)}{k!}(\mathfrak{D}u)^k
\end{equation*}
for  $u\in \CZ_n$ with $\zco u\in A$.  The Pigeonhole Principle guarantees that $(\mathfrak{D}u)^k=0$ for all $k>n$.  Hence, for fixed $z_\varnothing\in A$, any analytic function $f:A\to\mathbb{C}$ has a zeon polynomial form centered at $z_\varnothing=\zco u$: \begin{equation*}
\varphi_{z_\varnothing}(u)=\alpha_n (u-z_\varnothing)^n+\alpha_{n-1}(u-z_\varnothing)^{n-1}+\cdots+\alpha_1(u-z_\varnothing)+\alpha_0, 
\end{equation*}
where the complex coefficients $\alpha_k$ are determined by $\alpha_k=\displaystyle\frac{f^{(k)}(z_\varnothing)}{k!}$.  This polynomial representation is valid for all  $u\in\CZ_n$ such that $\zco u=z_\varnothing$.  In turn, each such polynomial  defines a spectral equivalence class of zeons; i.e., $z_1$ and $z_2$ are in the same class if and only if $\zco z_1=\zco z_2$.

It follows that to evaluate $\varphi(u)$ for $u=z_\varnothing+\zdu u$, one may simply evaluate $\varphi_{z_\varnothing}(\zdu u)$.  Moreover, to compute the inverse image of a complex zeon of the form $w=\varphi(z_\varnothing)+\zdu w$, one need only seek zeros of the polynomial $\varphi_{z_\varnothing}(u)-\zdu w$.  

\begin{lemma}
Let $\varphi(u):A\oplus {\CZ_n}^\circ$ be the zeon extension of an analytic complex function $f:A\subseteq\mathbb{C}\to\mathbb{C}$, and let $w\in f(A)\oplus\CZ_n^\circ$.  Let $z_\varnothing\in A$ denote a preimage of $w_\varnothing=\zco w$ under $f$.  If the complex polynomial $\zco (\varphi_{z_\varnothing}(u)-w)$ has a simple zero at $z_\varnothing$, then the spectrally simple zeon zero $\lambda$ of the zeon polynomial $\psi(u)=\varphi_{z_\varnothing}(u)-w$ satisfies\begin{equation*}
\varphi(\lambda)=w.
\end{equation*}
In other words, $\lambda=\varphi^{-1}(w)$ is the preimage of $w$ under $\varphi$.   
\end{lemma}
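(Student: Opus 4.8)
The plan is to recognize $\psi(u)=\varphi_{z_\varnothing}(u)-w$ as a zeon polynomial whose scalar polynomial has $z_\varnothing$ as a simple zero, apply the Fundamental Theorem of Zeon Algebra (Theorem \ref{zeon fta}) to produce its spectrally simple zero $\lambda$, and then transfer this zero back to the analytic extension $\varphi$ by invoking the fact that the centered polynomial $\varphi_{z_\varnothing}$ represents $\varphi$ on the spectral equivalence class of $z_\varnothing$.

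First I would record the structure of $\psi$. Since the coefficients $\alpha_k=f^{(k)}(z_\varnothing)/k!$ are complex scalars, the only non-scalar coefficient of $\psi$ is its constant term $\alpha_0-w$. Because $z_\varnothing$ is a preimage of $w_\varnothing$ under $f$, we have $\alpha_0=f(z_\varnothing)=w_\varnothing$, so this constant term is $-\mathfrak{D}w$, and thus $\psi(u)=p(u)-\mathfrak{D}w$ where $p(u)=\zco(\varphi_{z_\varnothing}(u)-w)$ is a complex polynomial with $p(z_\varnothing)=0$. Since the scalar-part map is an algebra homomorphism $\CZ_n\to\mathbb{C}$, the leading coefficient of $\psi$ coincides with the nonzero leading coefficient of $p$; being a nonzero complex scalar it is invertible, so (after the normalization noted in the remark following Theorem \ref{zeon fta}) $\psi$ may be taken monic.

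Next I would invoke the hypothesis directly: $z_\varnothing$ is a \emph{simple} zero of $p=\zco(\psi)$. This is exactly the input required by Theorem \ref{zeon fta}, which then yields a unique spectrally simple zeon zero $\lambda$ of $\psi$ with $\zco\lambda=z_\varnothing$, explicitly computable via Algorithm \ref{GreedyZeonAlgorithm}. In particular $\psi(\lambda)=\varphi_{z_\varnothing}(\lambda)-w=0$, so $\varphi_{z_\varnothing}(\lambda)=w$.

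Finally I would close the loop. Since $\zco\lambda=z_\varnothing\in A$, the zeon $\lambda$ lies in the spectral equivalence class on which the centered polynomial representation agrees with the analytic extension: writing $\mathfrak{D}\lambda=\lambda-z_\varnothing$, we have $\varphi(\lambda)=\sum_{k=0}^n \frac{f^{(k)}(z_\varnothing)}{k!}(\mathfrak{D}\lambda)^k=\varphi_{z_\varnothing}(\lambda)=w$, whence $\lambda=\varphi^{-1}(w)$. There is no serious obstacle: the genuine content is the structural observation that $\psi$ has invertible scalar leading coefficient and constant-scalar part vanishing at $z_\varnothing$, so that Theorem \ref{zeon fta} applies verbatim, together with the identification $\varphi(\lambda)=\varphi_{z_\varnothing}(\lambda)$, which holds precisely because $\zco\lambda=z_\varnothing\in A$ places $\lambda$ in the spectral class where the centered polynomial represents $\varphi$.
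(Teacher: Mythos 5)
Your proposal is correct and follows exactly the route the paper intends: the paper states this lemma without a written proof, treating it as an immediate consequence of Theorem \ref{zeon fta} applied to $\psi(u)=\varphi_{z_\varnothing}(u)-w$ together with the fact that the centered polynomial $\varphi_{z_\varnothing}$ agrees with $\varphi$ on the spectral equivalence class $\{u:\zco u=z_\varnothing\}$. Your writeup simply makes the implicit steps explicit --- verifying that the constant term is $-\mathfrak{D}w$, that the leading coefficient is an invertible complex scalar so the monic normalization in the remark after Theorem \ref{zeon fta} applies, and that $\zco\lambda=z_\varnothing$ licenses the identification $\varphi(\lambda)=\varphi_{z_\varnothing}(\lambda)$ --- which is exactly the intended argument.
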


\begin{example}
As a simple example, we wish to compute the preimage of $w=\frac{\sqrt{3}}{2}+\left(\zeta _{\{1,2\}}+3 \zeta _{\{1\}}-\zeta _{\{4\}}\right)$ under the zeon extension of $f(z)=\cos z$ to $\CZ_4$. The required polynomial representation is seen to be \begin{eqnarray*}
\varphi_{\pi/3}(u)-w&=&
-\zeta _{\{1,2\}}-3 \zeta _{\{1\}}+\zeta _{\{4\}}+\frac{\left(u-\frac{\pi }{6}\right)^4}{16 \sqrt{3}}+\frac{1}{12} \left(u-\frac{\pi }{6}\right)^3\\
&&+\frac{1}{12} (\pi -6 u)-\frac{(\pi -6 u)^2}{48 \sqrt{3}}.
\end{eqnarray*}
The scalar projection $\zco(\varphi_{\pi/3}(u)-w)$ has a simple zero at $u_0=\frac{\pi}{3}$, and the corresponding unique complex zeon zero is numerically computed to be \begin{equation*}
\lambda=0.523599-2. \zeta _{\{1,2\}}+20.7846 \zeta _{\{1,4\}}+6.9282 \zeta _{\{1,2,4\}}-6. \zeta _{\{1\}}+2. \zeta _{\{4\}}.
\end{equation*}
The preimage is verified by computing $\varphi(\lambda)$:
\begin{equation*}
\varphi(\lambda)=0.866025+1. \zeta _{\{1,2\}}+3. \zeta _{\{1\}}-1. \zeta _{\{4\}},
\end{equation*}
where $0.866025\approx\frac{\sqrt{3}}{2}$, as desired.
\end{example}

As a corollary, it follows that when $\varphi$ is the zeon extension of an invertible analytic function $f$, the zeon polynomial $\varphi_{z_\varnothing}(u)-w$ will have a unique spectrally simple zeon zero $\lambda$.

\begin{corollary}
Let $\varphi(u):A\oplus {\CZ_n}^\circ$ be the zeon extension of an invertible analytic complex function $f:A\subseteq\mathbb{C}\to\mathbb{C}$, and let $w\in f(A)\oplus\CZ_n^\circ$.  Let $z_\varnothing\in A$ denote the unique preimage of $w_\varnothing=\zco w$ under $f$. The unique spectrally simple zeon zero $\lambda$ of the zeon polynomial  $\psi(u)=\varphi_{z_\varnothing}(u)-w$ satisfies\begin{equation*}
\varphi(\lambda)=w.
\end{equation*}
In other words, $\lambda=\varphi^{-1}(w)$ is the preimage of $w$ under $\varphi$.   
\end{corollary}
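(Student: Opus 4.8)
The plan is to derive this corollary from the preceding Lemma by verifying its hypothesis and then upgrading the conclusion to a genuine inverse. Since the Lemma already delivers $\varphi(\lambda)=w$ as soon as one knows that the scalar polynomial $\zco(\varphi_{z_\varnothing}(u)-w)$ has a \emph{simple} zero at $z_\varnothing$, the heart of the matter is to show that invertibility of $f$ forces this simplicity. First I would record that the coefficients of $\varphi_{z_\varnothing}$ are the complex scalars $\alpha_k=f^{(k)}(z_\varnothing)/k!$, so that $\zco(\varphi_{z_\varnothing}(u)-w)=\sum_{k=0}^{n}\alpha_k(u-z_\varnothing)^k-w_\varnothing$, a complex polynomial in $u$. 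Evaluating at $u=z_\varnothing$ collapses every term of positive degree, leaving $\alpha_0-w_\varnothing=f(z_\varnothing)-w_\varnothing=0$ because $z_\varnothing$ is a preimage of $w_\varnothing$ under $f$; hence $z_\varnothing$ is at least a zero of this polynomial.

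Next I would compute the derivative of $\zco(\varphi_{z_\varnothing}(u)-w)$ with respect to $u$ at $u=z_\varnothing$, which is exactly $\alpha_1=f'(z_\varnothing)$. The key step is then to argue $f'(z_\varnothing)\ne 0$. This is where invertibility of $f$ enters: an injective holomorphic function on an open subset of $\mathbb{C}$ has nowhere-vanishing derivative (equivalently, if $f$ admits an analytic inverse, then differentiating $f^{-1}\circ f=\mathrm{id}$ via the chain rule gives $f'(z_\varnothing)\ne 0$ directly). Consequently $z_\varnothing$ is a simple zero of $\zco(\varphi_{z_\varnothing}(u)-w)$, so the hypothesis of the preceding Lemma is satisfied. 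Applying that Lemma produces the spectrally simple zeon zero $\lambda$ of $\psi(u)=\varphi_{z_\varnothing}(u)-w$ with $\zco\lambda=z_\varnothing$ and $\varphi(\lambda)=w$.

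Finally I would establish that $\lambda$ is the genuine preimage, i.e.\ that $\varphi$ is injective on the relevant spectral class so that the notation $\lambda=\varphi^{-1}(w)$ is unambiguous. Suppose $\varphi(v)=w$. Taking scalar parts and using $\zco(\varphi(v))=f(\zco v)$ gives $f(\zco v)=w_\varnothing$; invertibility of $f$ then forces $\zco v=z_\varnothing$, the unique scalar preimage. Thus $v$ is a zero of $\psi$ whose scalar part equals the simple zero $z_\varnothing$ of $\zco\psi$, and the uniqueness clause of Theorem \ref{zeon fta} (whose argument shows that any two zeros sharing a simple scalar part must coincide) yields $v=\lambda$. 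Hence $\lambda$ is the unique spectrally simple zeon zero and $\lambda=\varphi^{-1}(w)$.

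I expect the only non-formal step to be the claim $f'(z_\varnothing)\ne 0$; everything else is bookkeeping with scalar projections together with appeals to the already-proved Lemma and Theorem \ref{zeon fta}. If the paper's notion of ``invertible analytic function'' is read as merely injective, the argument rests on the standard complex-analytic fact above; if it means possessing an analytic inverse, the chain-rule computation makes the nonvanishing of $f'$ immediate.
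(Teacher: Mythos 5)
Your proof is correct and takes essentially the route the paper intends: the corollary is stated there as an immediate consequence of the preceding Lemma, with invertibility of $f$ supplying both the simple-zero hypothesis (via $f'(z_\varnothing)\ne 0$, the standard fact that an injective analytic function has nonvanishing derivative) and the uniqueness of the scalar preimage. Your write-up simply fills in the details the paper leaves implicit, including the final uniqueness argument via Theorem \ref{zeon fta}.
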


\section{Non-spectrally Simple Zeon Zeros}

A little more can be said about simple zeon zeros of $\varphi$ whose scalar part is a multiple zero of $\zco \varphi$.

\begin{theorem}
Let $\varphi(u)$ be a monic polynomial over $\CZ_n$, and let $w_0\in\mathbb{C}$.  If $\varphi$ has complex zeon zeros $w_1, w_2$ satisfying $\zco w_1=\zco w_2=w_0$, then 
$\varphi$ has infinitely many zeros of the form $w=w_0+\zdu w$.
\end{theorem}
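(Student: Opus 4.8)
The plan is to use the two given zeros to force a vanishing-derivative condition on the induced scalar polynomial $f=\zco\varphi$, and then manufacture a one-parameter family of zeros by perturbing in the top-grade direction $\zeta_{[n]}$. First I would dispose of the degenerate case: if $w_1=w_2$ the statement asserts nothing, so I take $w_1\neq w_2$ and set $\eta=w_1-w_2$. The hypothesis $\zco w_1=\zco w_2=w_0$ gives $\zco\eta=0$, so $\eta$ is a nonzero nilpotent; in particular $n\ge 1$, and by the blade product rule \eqref{blade product} the top blade $\zeta_{[n]}$ is a nonzero nilpotent satisfying $\zeta_{[n]}^2=0$ and $z\,\zeta_{[n]}=\zco(z)\,\zeta_{[n]}$ for every $z\in\CZ_n$. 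Since $\varphi(w_1)=0$ and $\varphi$ is monic, the zeon remainder theorem yields a monic factorization $\varphi(u)=(u-w_1)\,q(u)$ with $\deg q=m-1$.

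The crucial step is to show that $f'(w_0)=0$. Applying the scalar-part projection $\zco$ (a ring homomorphism $\CZ_n\to\mathbb{C}$ with kernel ${\CZ_n}^\circ$) coefficientwise to the factorization gives $f(u)=(u-w_0)\,\tilde q(u)$, where $\tilde q=\zco q$. Evaluating the zeon factorization at $w_2$ produces $0=\varphi(w_2)=(w_2-w_1)\,q(w_2)=-\eta\,q(w_2)$. Because $\eta$ is a nonzero nilpotent, $q(w_2)$ cannot be invertible (otherwise multiplying by $q(w_2)^{-1}$ would force $\eta=0$), so $\zco\bigl(q(w_2)\bigr)=0$. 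As $\zco\bigl(q(w_2)\bigr)=\tilde q(w_0)$, this gives $\tilde q(w_0)=0$, whence $(u-w_0)^2\mid f(u)$ and therefore $f'(w_0)=\tilde q(w_0)=0$. I expect this to be the heart of the argument: it is exactly where the existence of \emph{two distinct} zeros sharing a scalar part is used, and it mirrors the uniqueness half of Theorem \ref{zeon fta}, which already tells us that $w_0$ cannot be a simple zero of $f$.

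With $f'(w_0)=0$ in hand, I would exhibit the family. For each $c\in\mathbb{C}$ set $w^{(c)}=w_2+c\,\zeta_{[n]}$, so that $\zco w^{(c)}=w_0$ and distinct values of $c$ give distinct zeons. Using $\zeta_{[n]}^2=0$, the binomial expansion of $(w_2+c\,\zeta_{[n]})^j$ truncates after its linear term, and $w_2^{\,j-1}\zeta_{[n]}=w_0^{\,j-1}\zeta_{[n]}$ by the top-blade identity; summing against the coefficients of $\varphi$ yields $\varphi\bigl(w^{(c)}\bigr)=\varphi(w_2)+c\,f'(w_0)\,\zeta_{[n]}$. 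Since $\varphi(w_2)=0$ and $f'(w_0)=0$, every $w^{(c)}$ is a zero of $\varphi$ of the required form $w_0+\zdu w$, giving infinitely many such zeros. The remaining work is routine: verifying the two blade identities and carrying out the truncated binomial computation.
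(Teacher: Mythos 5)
Your proof is correct, and it takes a genuinely different route from the paper's. The paper's proof is much shorter: it asserts the simultaneous factorization $\varphi(u)=(u-w_1)(u-w_2)g(u)$ and then observes that for every nonzero scalar $a$, $\varphi(w_1+a\zeta_{[n]})=a\zeta_{[n]}\bigl(a\zeta_{[n]}+\zdu w_1-\zdu w_2\bigr)g(w_1+a\zeta_{[n]})=0$, because $\zeta_{[n]}$ annihilates every element with vanishing scalar part; no derivative condition ever appears. You instead factor out only the single root $w_1$ via the division algorithm (Theorem \ref{zeon poly division}), which is fully justified since $u-w_1$ is monic, and you use the second root to force non-invertibility of $q(w_2)$, hence $\tilde q(w_0)=0$ and $f'(w_0)=0$; your family $w_2+c\,\zeta_{[n]}$ then works by the truncated expansion $\varphi(w_2+c\,\zeta_{[n]})=\varphi(w_2)+c\,f'(w_0)\,\zeta_{[n]}$. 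What your route buys is rigor exactly where the paper is vulnerable: $\CZ_n$ has zero divisors, so two distinct roots sharing a scalar part do \emph{not} automatically give a factorization containing both linear factors. For instance, $\varphi(u)=u^2$ over $\CZ_1$ has roots $w_1=\zeta_{\{1\}}$ and $w_2=2\zeta_{\{1\}}$, yet no factorization $u^2=(u-w_1)(u-w_2)g(u)$ exists: comparing degrees and leading coefficients forces $g=1$, while $(u-\zeta_{\{1\}})(u-2\zeta_{\{1\}})=u^2-3\zeta_{\{1\}}u\ne u^2$. Your argument handles this case without difficulty (there $q(u)=u+\zeta_{\{1\}}$, $f'(0)=0$, and every $c\,\zeta_{\{1\}}$ is a zero), whereas the paper's opening claim fails for it even though the theorem's conclusion still holds; your byproduct $f'(w_0)=0$ also makes explicit why such a $w_0$ can never be a spectrally simple zero, tying the result to Theorem \ref{zeon fta}. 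What the paper's route buys, when the factorization is available, is brevity: the conclusion is immediate with no Taylor-type computation. Both arguments ultimately rest on the same two devices --- perturbation by scalar multiples of the top blade $\zeta_{[n]}$ and the identity $z\,\zeta_{[n]}=\zco(z)\,\zeta_{[n]}$ --- and your reading of the hypothesis as requiring $w_1\ne w_2$ matches the paper's evident intent, since otherwise the statement is false (a spectrally simple zero is unique).
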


\begin{proof}
If $\varphi$ has complex zeon zeros $w_1, w_2$, then one can write $\varphi(u)=(u-w_1)(u-w_2)g(u)$, where $g(u)$ is a nonzero zeon polynomial.  Assuming satisfying $\zco w_1=\zco w_2=w_0$, it follows that for any nonzero scalar $a$,
 \begin{eqnarray*}
\varphi(w_1+a \zeta_{[n]})&=&(w_1+a \zeta_{[n]}-w_1)(w_1+a \zeta_{[n]}-w_2)g(w_1+a \zeta_{[n]})\\
&=&a\zeta_{[n]}(a\zeta_{[n]}+\zdu w_1-\zdu w_2)g(w_1+a \zeta_{[n]})\\
&=&0.
\end{eqnarray*}
\end{proof}

This leads immediately to a corollary involving multiple zeros.   If $z$ is a zeon zero of multiplicity $m\ge 2$, it follows that $\varphi(u)=(u-z)^{m}\gamma(u)$, where $\gamma(u)$ is a zeon polynomial such that $\gamma(z)\ne 0$.  It thereby follows that for any nilpotent $w\in\CZ_n$ such that $\kappa(w)\le m$, the following holds:
\begin{eqnarray*}
\varphi(z+w)&=&(z+w-z)^m\gamma(z+w)\\
&=&w^m\gamma(z+w)\\
&=&0.
\end{eqnarray*}
Thus, the following corollary is established.

\begin{corollary}
Let $\varphi(u)$ be a monic polynomial over $\CZ_n$.  If  $z\in\CZ_n$ is a complex zeon zero of $\varphi$ of multiplicity $m\ge 2$, then $\varphi$ has infinitely many complex zeon zeros zeros $u$ satisfying $\zco u=\zco z$ .
\end{corollary}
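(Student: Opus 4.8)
The plan is to prove the corollary by combining the zeon division machinery with an explicit infinite family of nilpotent perturbations. First I would make precise the meaning of a complex zeon zero of multiplicity $m$: namely, $m$ is the largest integer for which the monic polynomial $(u-z)^m$ divides $\varphi(u)$. Since $u-z$ is monic, its leading coefficient is invertible, so the zeon polynomial division algorithm (Theorem \ref{zeon poly division}) applies, and repeated division by $u-z$ yields a factorization
\begin{equation*}
\varphi(u)=(u-z)^m\gamma(u),
\end{equation*}
where $\gamma(u)$ is a zeon polynomial with $\gamma(z)\ne 0$; were $\gamma(z)=0$, the zeon remainder theorem would force another factor of $u-z$, contradicting maximality of $m$.

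Next I would exploit the nilpotent structure of $\CZ_n$. For any nilpotent $w\in\CZ_n$ whose index of nilpotency satisfies $\kappa(w)\le m$, one has $w^m=0$, and substituting $u=z+w$ gives $\varphi(z+w)=w^m\gamma(z+w)=0$. Since $w$ is nilpotent, $\zco(z+w)=\zco z$, so every such perturbation produces a zero of $\varphi$ lying in the same spectral equivalence class as $z$. The point of the hypothesis $m\ge 2$ is that it admits the simplest nonzero nilpotents, those squaring to zero.

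It remains to exhibit infinitely many admissible $w$. The top-grade blade $\zeta_{[n]}$ satisfies $\zeta_{[n]}^2=0$ by the blade product rule \eqref{blade product}, so the one-parameter family $\{a\,\zeta_{[n]}:a\in\mathbb{C}\}$ consists of distinct nilpotents, each with $\kappa\le 2\le m$. Each member yields a distinct zero $z+a\,\zeta_{[n]}$ of $\varphi$ with scalar part $\zco z$, giving infinitely many zeros as claimed.

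The main obstacle is the first step: rigorously obtaining the factorization $\varphi(u)=(u-z)^m\gamma(u)$ with $\gamma(z)\ne 0$. This depends on a clean definition of multiplicity for zeon zeros together with invertibility of the leading coefficient of $u-z$, both of which are secured by Theorem \ref{zeon poly division}. Once the factorization is in hand, the remaining verification is routine, since it reduces to the observation that $w^m=0$ whenever $\kappa(w)\le m$.
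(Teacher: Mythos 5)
Your proposal is correct and follows essentially the same route as the paper: factor $\varphi(u)=(u-z)^m\gamma(u)$ with $\gamma(z)\ne 0$, then observe that $\varphi(z+w)=w^m\gamma(z+w)=0$ for any nilpotent $w$ with $\kappa(w)\le m$. Your additional details (justifying the factorization via the division algorithm and exhibiting the explicit family $\{a\,\zeta_{[n]}:a\in\mathbb{C}\}$) only make explicit what the paper leaves implicit.
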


\section{Conclusion}\label{conclusion}

To summarize, a complex zeon polynomial $\varphi$ of degree $m$ whose corresponding scalar polynomial $f=\zco \varphi$ has a simple complex zero $r$ will have a unique simple complex zeon zero $u$ whose scalar part is $\zco u=r$.  If $f$ has a complex zero $r$ of multiplicity greater than $1$ and $\varphi$ has a complex zeon zero $u$ such that $\zco u=r$, then $\varphi$ has infinitely many such complex zeon zeros.    When $\varphi$ is the zeon extension of an invertible analytic function, inverse images of elements in the range of $\varphi$ can be computed using the polynomial methods developed here.  

With these basic results on zeon polynomials established, other avenues of research can be explored.  Zeon matrix polynomials and eigenvalues of zeon matrices are of particular interest in light of their known graph-theoretic applications and properties.

\end{document}